\newtheorem{thm}{Theorem}[section]
\newtheorem{rem}[thm]{Remark}
\title[Stability Analysis of Prey-Predator Model with Infection and Vaccination in Prey]{Stability Analysis of Prey-Predator Model with Infection, Migration and Vaccination in Prey}
\author[\textbf{Sachin Kumar} and \textbf{Harsha Kharbanda}]
{\textbf{Sachin Kumar} \and \textbf{Harsha Kharbanda}\\}
\address{Sachin Kumar,Assistant Professor, Faculty of Mathematical Sciences,
Department of Mathematics,
University of Delhi,
New Delhi-110007,
India.}
\email{sachinambariya@gmail.com}
\address{Harsha Kharbanda,Research Scholar, Faculty of Mathematical Sciences,
Department of Mathematics,
University of Delhi,
New Delhi-110007,
India.}
\email{ssdn0112@gmail.com}
\subjclass[2010]{Primary 92D40; Secondary 34C60, 34D20, 92D25}
\keywords{Prey-Predator System, Basic Reproduction Number, Equilibrium points, Vaccination, Migration, Stability.}
\numberwithin{equation}{section}
\begin{document}
\maketitle
\begin{abstract}
A four dimensional ecoepidemiological model consisting of susceptible prey, infected prey, vaccinated prey and predator is formulated and analyzed in the present work. The functional response is assumed to be of Lotka-Volterra type. We studied systematically the behavior of the model with and without disease in prey. We analyzed mathematically the dynamics of the system such as boundedness of the solutions, existence and stability conditions of equilibria. The basic reproduction number $\mathcal{R}_0$ for the proposed model is computed. Disease is endemic if $\mathcal{R}_0>1$. Numerical simulations are also carried out for the analytical results.
\end{abstract}

\section{\textbf{Introduction}}
To study the dynamic behavior of a model, mathematical modeling is used as an effective tool to describe and analyze the model. In 1798, the British Economist Malthus formulated a single species model \cite{MALTHUS} and subsequently modified by Verhulst. Lotka  and Volterra \cite{{LOTKA},{VOLTERRA}} initially proposed the prey-predator model. Afterwards, prey-predator model became an important research area in applied mathematics. Mathematical epidemiology has become an interesting topic of research since the  model of Kermack-McKendrick \cite{KERMACK} on SIRS (susceptible-infected-removed-susceptible) systems. In 1994, Venturino \cite{EV} discussed the influence of diseases on Lotka-Volterra systems. Many authors have studied prey-predator model and published papers in literature, for example (see \cite{{HADELER},{HAQUE},{HETHCOTE},{JC},{KUANG}}, etc.). Here we focus on the influence of infectious disease on prey-predator interactions. In \cite{HADELER}, Hadeler and Freedman developed and analyzed a prey-predator model with parasitic infection in both species. Kuang and Beretta \cite{KUANG} considered the global behaviors of solutions of a ratio-dependent prey-predator system. Chattopadhyay and Orino \cite{JC} proposed and analyzed a three dimensional predator-prey model with disease only in prey population. In \cite{HAQUE}, Haque and Venturino analyzed the prey-predator model by considering a Holling-Tanner functional response. They also investigated some bifurcations around the disease free equilibrium. A predator-prey model with logistic growth in the prey is modified to include an SIS parasitic infection in the prey studied by Hethcote\ et al. \cite{HETHCOTE}. In \cite{B AND R}, Mukhopadhyaya and Bhattacharyya considered a prey-predator model with Holling type II functional response and observed the dynamics of the system with the effect of diffusion and delay. They also discussed the role of diffusivity on the stability and persistence of the model. Venturino \cite{EV1} investigated the long term behavior in predator-prey model assuming that epidemics occured in prey population and can be transmitted by the contact of predators. \par
Mathematical ecology and mathematical epidemiology are two different fields in the study of biology and applied mathematics. The combination of these two is studied which is termed as eco-epidemiology. Many authors have studied eco-epidemiological models and considered infection in prey population only. Hu and Li \cite{HU AND LI} proposed and analyzed a three dimensional predator-prey delayed model with infection in prey species. They also determined the direction of Hopf bifurcations and the stability of bifurcated periodic solutions. Johri\ et al. \cite{JOHRI} considered a Lotka-Volterra type prey-predator model with disease in prey and analyzed local and global stability. In \cite{JANA}, Jana and  Kar considered a prey-predator model with disease in prey and they used the normal form method and center manifold theorem to investigate the direction of the Hopf bifurcation and stability of the bifurcating limit cycle. Many authors have studied eco-epidemiological models and considered infection in both species such as Kant and Kumar \cite{KANT} formulated and studied a predator-prey model with migrating prey and disease infection in both species. \par

Recently, many authors have proposed and discussed eco-epidemiological models with some assumptions (for instance, \cite{{SARWARDI},{NAJI},{LIU1},{RAHMAN},{XIE},{SINHA},{SILVA}}). They all considered prey-predator model with infection in prey population only. Naji and Mustafa \cite{NAJI} discussed the dynamics of an eco-epidemiological model with nonlinear incidence rate. Silva \cite{SILVA} described the existence of periodic solutions for periodic eco-epidemic models with disease in the prey. Xie\ et al. \cite{XIE} considered the impulsive predator-prey model with communicable disease. The predator-prey model in polluted environment is analyzed by Sinha\ et al. \cite{SINHA}. To explore more about the dynamical systems, one may refer \cite{{KOT},{SMALE},{PERKO}}.

Further, vaccination is important for the elimination of infectious diseases. A vaccine is a biological preparation which provides active acquired immunity to a particular disease. It has been an effective way to reduce disease burden, and is a key tool in maintaining health and welfare. Vaccination is given to all the species including human population. Animal vaccines are part of a category of animal medicines known as veterinary biologics. Vaccines continue to play an increasingly vital role in preventative health and disease control programmes in animals. Vaccination helps to lower the number of infected individuals in the population. \par 

Also, migration is an important demographic event which is found in all the species. The physical movement from one place to another is termed as migration. One of the reasons for animal migration is due to the change in season. For example, bird migration is the regular seasonal movement, often north and south along a flyway, between breeding and wintering grounds and the timing of migration seems to be controlled primarily by changes in day length. The reasons for migration depend on species to species. Since we have taken prey-predator model so scientifically, the effect of migration must be taken into consideration while formulating the mathematical model of the prey-predator systems. Dingle and Drake \cite{DINGLE} explained the term migration for different species. They recognized migration as an adaptation to resources that fluctuate spatiotemporally either seasonally or less predictably. Some authors have studied predator-prey model by taking migration in prey species. For example, Kant and Kumar \cite{KANT} analyzed eco-epidemiological model with infection in both species and migration only in prey population. \par

In the present study, motivated by Hu and Li \cite{HU AND LI}, Liu\ et al. \cite{LIU} and Kant and Kumar \cite{KANT}, we proposed a four dimensional eco-epidemiological model with infection, migration and vaccination in prey population. It consists of susceptible prey, infected prey, vaccinated prey and predator. Local stability has been analyzed. The detailed assumptions for the model is described in the next section. \par

Remaining part of the paper is organized as follows: Section \ref{model} is related to model formulation, Section \ref{results} describes the boundedness of the system and the computation of basic reproduction number $\mathcal{R}_0$. In Section \ref{mwd}, we analyze the model in the absence of infection. In Section \ref{stability}, we discuss the existence and stability conditions of equilibrium points of the main model. Section \ref{numericals} deals with an example to explore analytical results numerically. Paper is concluded in Section \ref{discussion} with a detailed discussion on equilibria of model, role of vaccination and effect of migration.

\section{\textbf{Mathematical Model}}\label{model}
\subsection{Model Formulation}

\noindent
Our model consists of two populations, namely,
the prey, whose population density is denoted by $N(t)$ and
the predator, whose population density is denoted by $P(t)$, where $t$ is the time variable.
We make the following assumptions to formulate our model:
\begin{enumerate}
\item[H(1)] The prey population grows according to logistic law with growth rate $r(r>0)$ and carrying capacity $k(k>0)$ in the absence of disease, vaccination and predation. Therefore we have:
$$\frac{dS}{dt}= r S \left(1-\frac{S}{k}\right).$$
\item[H(2)] Vaccinated prey has a separate class $(V)$ and it is assumed that vaccination is given to only healthy prey with rate of vaccination $\phi$ and $\theta$ is the rate at which the vaccinated individuals return to susceptible class.
\item[H(3)] The prey population is divided into three classes in the presence of disease and vaccination, namely susceptible prey $S(t)$, infected prey $I(t)$ and vaccinated prey $V(t)$, and hence the total prey population at time t will be:
$$N(t)= S(t) + I(t) + V(t).$$
Further, it is assumed that only the susceptible prey can reproduce reaching to its carrying capacity. However, the infected prey does not grow, recover and reproduce.
\item[H(4)] It is assumed that the disease spreads among the prey population only and the transmission of disease between susceptible and infected prey follow the simple law of mass action $ \beta SI$, where $\beta$ is the force of infection.
\item[H(5)] The vaccinated prey still have the possibility of infection with a disease transmission rate $\sigma $ while contacting with infected individuals. $\sigma $ may be assumed to be less than $\beta$ because the vaccinating prey may have some partial immunity during the process or they may recognize the transmission characters of the disease and hence decrease the effective contacts with infected individuals.
\item[H(6)] Predators get the same reward out of predating on healthy, infected and vaccinated prey with different search efficiencies denoted by $p_1, p_2$ and $p_3$, respectively. Also, infected prey become less active and therefore they could get caught easily by the predator compared to healthy prey. Thus, we assume that searching coefficient of the predator for infected prey is greater than that of healthy prey.
\item[H(7)] The functional response of the predator to the prey is assumed to be of Lotka-Volterra type.
\item[H(8)] It is assumed that coefficients of conversing of healthy, infected and vaccinated prey to predator are different denoted by $q_1, q_2$ and $q_3$, respectively.
\item[H(9)] Prey population has migration rates as $m_1, m_2$ and $m_3$ corresponding to healthy, infected and vaccinated prey. It is a natural factor that healthy prey are more strong as compared to infected prey and therefore the probability of migration of healthy prey is more than that of infected prey.
\item[H(10)] It is assumed that all the four species may have different natural death rates.
\end{enumerate}
\tikzstyle{start} = [rectangle, rounded corners, minimum width=1.5cm, minimum height=1.5cm,text centered, draw=black, fill=red!20]

\tikzstyle{arrow} = [thick,->,>=stealth]
\tikzstyle{mytext} = [text width=0.5cm,text centered]

\begin{figure} 
\begin{tikzpicture}
\draw (0,0) node(s) [start] {S} (4,0) node(i) [start] {I} (8,0) node(p) [start] {P} (4,-3) node(v) [start] {V};
\draw [arrow] (s)--(0,2)-|(p.north)node[mytext,midway,xshift=-4cm,anchor=south]{$p_1$};
\draw [arrow] (s)--(v)node[mytext,midway,anchor=south]{$\phi$};
\draw [arrow] (s)--(i) node[mytext,midway,anchor=south]{$\beta$};
\draw [arrow] (i)--(p)node[mytext,midway,anchor=south]{$p_2$};
\draw [arrow] (v)--(p)node[mytext,midway,anchor=west,xshift=0.2cm]{$p_3$};
\draw [arrow] (v.west)--(s.south) node[mytext,midway,anchor=north,xshift=-0.4cm,yshift=0.2cm]{$\theta$};
\draw [arrow] (-2.5,0)--(s)node[mytext,midway,anchor=south]{$r$};
\draw [arrow][dashed](s)--(-2,1.7)node[mytext,yshift=0.2cm,xshift=-0.5]{$m_1$};
\draw [arrow][dashed] (s)--(-1,2)node[mytext,yshift=0.2cm]{$d_1$};
\draw [arrow] (v)--(i)node[mytext,midway,anchor=west]{$\sigma$};
\draw [arrow][dashed] (p)--(10,0)node[mytext,midway,anchor=south]{$d_4$};
\draw [arrow][dashed] (v)--(6,-4)node[mytext,midway,anchor=east,yshift=-0.3cm,xshift=0.3cm]{$d_3$};
\draw [arrow][dashed] (v)--(6,-3)node[mytext,midway,anchor=south,xshift=0.7cm]{$m_3$};
\draw [arrow][dashed] (i)--(5,1.4)node[mytext,midway,anchor=south,yshift=0.2cm,xshift=0.1cm]{$d_2$};
\draw [arrow][dashed] (i)--(4.2,1.4)node[mytext,midway,anchor=south,yshift=0.3cm]{$c$};
\draw [arrow][dashed] (i)--(5.5,1.3)node[mytext,midway,anchor=south,yshift=0.2cm,xshift=0.4cm]{$m_2$};
\end{tikzpicture}
\caption{Schematic diagram of model}
\label{fig:1}
\end{figure}
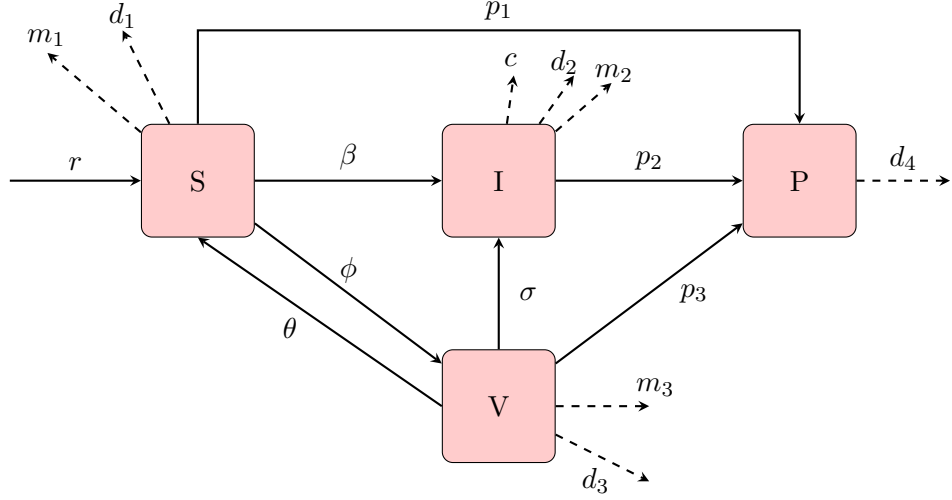

\vspace{12pt}
The mathematical model with above assumptions leads to the following differential equations given by:
\begin{eqnarray} \label{eq1}
\begin{aligned}
\frac{dS}{dt}&=\underbrace{r S\left(1-\frac{S+I}{k}\right)}_{Growth}-\underbrace{\beta S I}_{Infection} -\underbrace{\phi S +\theta V}_{Vaccination} - \underbrace{p_1 P S}_{Predation}- \underbrace{m_1 S}_{Migration} -\underbrace{d_1 S,}_{Mortality}\\
\frac{dI}{dt}&=\underbrace{\beta S I + \sigma V I}_{Infection}- \underbrace{p_2 P I}_{Predation}- \underbrace{m_2 I}_{Migration}-\underbrace{d_2 I - c I,}_{Mortality}\\
\frac{dV}{dt}&=\underbrace{\phi S - \theta V}_{Vaccination}-\underbrace{\sigma V I}_{Infection}-\underbrace{ p_3 P V}_{Predation}-\underbrace{m_3 V}_{Migration} -\underbrace{d_3 V,}_{Mortality}\\
\frac{dP}{dt}&= \underbrace{q_1 p_1 P S + q_2 p_2 P I +q_3 p_3 P V}_{Prey \ Consumption} -\underbrace{d_4 P}_{Mortality}
\end{aligned}
\end{eqnarray}
with initial conditions as $S(0)=S_0 > 0, I(0)=I_0 \geq 0, V(0)=V_0\geq 0$ and $P(0)= P_0 > 0$. All the parameters with their biological/ecological meaning are given in Table \ref{table:1}. The details of the population flux is shown in Figure \ref{fig:1}.
\begin{table}[h!] 
\caption{Biological/ecological meaning of parameters.}
\label{table:1}
\centering
\begin{tabular}{c l}  
\hline Parameter & Biological/ecological meaning \\ \hline
$r$ & Growth rate of prey \\
$\beta$ & Infection coefficient of healthy prey \\
$k$ & Carrying capacity \\
$p_1$ & Healthy prey-predation coefficient\\
$p_2$ & Infected prey-predation coefficient\\
$p_3$ & Vaccinated prey-predation coefficient\\
$q_1$ & Conversion coefficient from healthy prey to predator\\
$q_2$ & Conversion coefficient from infected prey to predator\\
$q_3$ & Conversion coefficient from vaccinated prey to predator\\
$\theta$ & Rate at which vaccination wears off\\
$\phi$ & Rate of Vaccination\\
$\sigma$ & Infection coefficient of vaccinated prey\\
$m_1$ & Migration rate of healthy prey\\
$m_2$ & Migration rate of infected prey\\
$m_3$ & Migration rate of vaccinated prey\\
$d_1$ & Natural death rate of healthy prey\\
$d_2$ & Natural death rate of infected prey\\
$d_3$ & Natural death rate of vaccinated prey\\
$d_4$ & Natural death rate of predator\\
$c$ & Death rate of infected prey due to infection\\
\hline
\end{tabular}
\end{table}
\begin{rem}\label{rem1}
If $\phi = 0$, then there will be no vaccination. Therefore, $ \lim \limits_{t \to \infty} V(t)=0$.
\end{rem}
\begin{rem}\label{rem2}
In this paper, we have maintained difference between mortality and migration but it is interesting to note that migration terms in model \eqref{eq1} look same as mortality terms. 
\end{rem}
\section{\textbf{Preliminary Results}}\label{results}
In this section, we analyze the boundedness of the solutions of the system \eqref{eq1}. Also, the basic reproduction number $\mathcal{R}_0$ is computed for the proposed model.
\subsection{Boundedness}
Since all the parameters are non-negative, the right hand side of \eqref{eq1} is a smooth function of variables $(S,I,V,P)$ in the positive octant,\\ $\Omega =\lbrace(S,I,V,P)| S\geq 0, I \geq 0, V \geq 0, P \geq 0\rbrace$. It is easy to prove that $\Omega$ is an invariant set. Since system \eqref{eq1} is homogeneous, we have $S=0, I=0, V=0$ and $P=0$ is one solution. The uniqueness and existence theorem ensures that any trajectory starting from the first quadrant remains in it, that is, no trajectory will cross the coordinate planes.\\
\noindent
Now we will prove the boundedness of the system \eqref{eq1}.
\begin{thm}
All the solutions of the system \eqref{eq1} are uniformly bounded.
\end{thm}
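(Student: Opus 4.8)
The plan is to bound a single scalar functional that combines all four populations, because --- and this is the whole point --- the predator cannot be bounded on its own: the gain term $q_1p_1PS+q_2p_2PI+q_3p_3PV$ in the last equation of \eqref{eq1} is not controlled by $-d_4P$ alone. It is, however, exactly the ``dual'' of the predation losses $-p_1PS-p_2PI-p_3PV$ appearing in the first three equations, so I would work on the invariant octant $\Omega$ with
\[
W(t)=S(t)+I(t)+V(t)+\tfrac{1}{q}\,P(t),\qquad q=\max\{q_1,q_2,q_3\}.
\]

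First I would add the first three equations of \eqref{eq1}: the infection transfers $\pm\beta SI$, $\pm\sigma VI$ and the vaccination transfers $\mp\phi S$, $\pm\theta V$ cancel in pairs, so that $\tfrac{d}{dt}(S+I+V)$ retains only the logistic input $rS(1-\tfrac{S+I}{k})$, the predation losses, and the linear loss terms $-(d_1+m_1)S-(c+d_2+m_2)I-(d_3+m_3)V$. Then I would add $\tfrac1q\dot P$ and use $q_i\le q$, so that $\bigl(\tfrac{q_i}{q}-1\bigr)p_iP(\cdot)\le 0$ for each prey class, together with $-\tfrac{r}{k}SI\le 0$, to arrive at a differential inequality of the form
\[
\dot W\le rS-\tfrac{r}{k}S^2-(d_1+m_1)S-(c+d_2+m_2)I-(d_3+m_3)V-\tfrac{d_4}{q}P.
\]

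Next, taking $\mu=\min\{c+d_2+m_2,\;d_3+m_3,\;d_4\}>0$ and adding $\mu W$ makes the $I$-, $V$- and $P$-contributions non-positive, leaving $\dot W+\mu W\le (r-d_1-m_1+\mu)S-\tfrac{r}{k}S^2$, whose right-hand side is a concave quadratic in $S$ and hence bounded above by a finite constant $M$. The comparison lemma applied to $\dot W\le M-\mu W$ then gives $W(t)\le\max\{W(0),M/\mu\}$ for all $t\ge 0$ and $\limsup_{t\to\infty}W(t)\le M/\mu$; since $S,I,V,P\ge 0$, each coordinate is dominated by a fixed multiple of $W$, so all solutions are uniformly bounded and eventually enter the absorbing set $\{W\le M/\mu+\varepsilon\}$.

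I expect the only genuine obstacle to be that very first step --- recognising that the predator equation must be carried along \emph{together} with the prey equations inside the weighted sum $W$, and choosing the weight $1/q$ so the predation terms have the right sign. Once that structure is in place, controlling the logistic term is just the elementary bound for a downward parabola, and the rest is the standard Gronwall/comparison argument; the parameter bookkeeping for $\mu$ and $M$ is routine.
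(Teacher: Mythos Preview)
Your argument is correct and follows essentially the same route as the paper's: build a scalar Lyapunov-type functional combining all four populations, show it satisfies a linear differential inequality $\dot W+\mu W\le M$, and conclude by the comparison lemma. The paper works with the unweighted sum $\chi=S+I+V+P$ and disposes of the predation terms by invoking the biologically reasonable (but not listed among the hypotheses) assumption $q_1,q_2,q_3<1$, so that $(q_i-1)p_iP(\cdot)\le 0$. Your weighted functional $W=S+I+V+\tfrac1q P$ with $q=\max\{q_1,q_2,q_3\}$ achieves exactly the same sign control via $q_i/q\le 1$ without that extra assumption, so your version is a mild generalization of the paper's proof rather than a genuinely different argument.
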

\begin{proof}
Let $\chi = S + I + V + P$. Its time derivative is given as:
\begin{equation*}
\frac{d\chi}{dt}=\frac{dS}{dt}+ \frac{dI}{dt}+ \frac{dV}{dt}+ \frac{dP}{dt}.
\end{equation*}
Now, for each $\mu>0$, we have
\begin{equation*}
\begin{split}
\frac{d\chi}{dt}+\mu \chi = & \; r S\left(1-\frac{S+I}{k}\right) -p_1 P S -m_1 S -d_1 S - p_2 P I- m_2 I-d_2 I - c I  - p_3 P V- m_3 V \\& -d_3 V+ q_1 p_1 P S + q_2 p_2 P I +q_3 p_3 P V -d_4 P +\mu \chi \\
\frac{d\chi}{dt}+\mu \chi  \leq & \; k \frac{(r+\mu)^2}{4r} -(m_2+d_2+c-\mu)I-(m_3+d_3-\mu) V-(d_4-\mu)P \quad (\text{since} \; q_1,q_2,q_3<1)\\
\leq &\; k \frac{(r+\mu)^2}{4r}=\eta \; \text{if} \; \mu<\text{min}(m_2+d_2+c,m_3+d_3,d_4).
\end{split}
\end{equation*} 
Therefore, we have ${d\chi}/{dt}+\mu \chi \leq \eta.$\\ Now, by applying theory of differential inequality, we obtain \\$0 < \chi(S,I,V,P) < (\eta / \mu)(1-\exp(-\mu t))+ \chi(S_0,I_0,V_0,P_0) \exp(-\mu t)$ and for $t\to \infty$, we have $0 < \chi(S,I,V,P) < (\eta / \mu)$.\\
Hence, all the solutions of the system \eqref{eq1} are confined in the region $\Lambda= \{(S,I,V,P)\in {\mathbb{R}^4_+}:\chi = \eta / \mu + \epsilon \; \text{for any} \; \epsilon>0\}$.
\end{proof}
\subsection{The basic Reproduction number}
The next generation matrix method \cite{DIEKMANN1} is used to calculate the basic reproduction number $\mathcal{R}_0$ \cite{JONES}. Clearly, $I$ is the only relevant class of infection. The class $I(t)$ from our model is
\begin{equation}\label{eq2}
\frac{dI}{dt}=\beta S I+\sigma V I- p_2 P I- m_2 I-d_2 I - c I.
\end{equation}
Therefore, two matrices $F$ and $V$ corresponding to the gain and loss components of equation \eqref{eq2} are defined as $F = (\beta S +\sigma V)$ and $V = (p_2 P + m_2 +d_2 + c )$. These matrices evaluated at the disease-free equilibrium point $E_1(S_1,0,V_1,0)$ where\\
\begin{equation*}
\begin{split}
S_1 &= \frac{k}{r}\left(r-\phi-m_1-d_1+\frac{\theta\phi}{\theta+m_3+d_3}\right) \ \text{and}\\
V_1 &= \frac{\phi k}{r(\theta+m_3+d_3)} \left(r-\phi-m_1-d_1+\frac{\theta\phi}{\theta+m_3+d_3}\right).
\end{split}
\end{equation*}
Now, the next generation matrix is defined as $G = F V^{-1}$. The basic reproduction number is the dominant eigenvalue of the next generation matrix. Thus,
$$ \mathcal{R}_0 = \frac{\beta S_1 +\sigma V_1}{c+m_2 +d_2}.$$
If $\mathcal{R}_0 >1$, then disease is endemic.
\section{\textbf{Model without disease}}\label{mwd}
In this section, model is transformed with the assumption that there does not occur any infection within prey population. Therefore, the model \eqref{eq1} is reduced into three dimensional prey-predator model with vaccination in prey. The model becomes:
\begin{equation}\label{eq3}
\begin{split}
\frac{dS}{dt}&=r S\left(1-\frac{S}{k}\right)-\phi S +\theta V-p_1 P S-m_1 S -d_1 S, \\
\frac{dV}{dt}&=\phi S - \theta V- p_3 P V- m_3 V -d_3 V,\\
\frac{dP}{dt}&= q_1 p_1 P S +q_3 p_3 P V -d_4 P
\end{split}
\end{equation}
with initial conditions $S(0)>0,V(0)\geq 0$ and $P(0)> 0$.
This system \eqref{eq3} has following equilibrium points:
\begin{enumerate}
\item [(i)] Trivial equilibrium, $E^{(0)}=(0,0,0)$.
\item [(ii)] Predator-free equilibrium, $E^{(1)}=(S_1,V_1,0)$, where
\begin{equation*}
\begin{split}
S_1 &= \frac{k}{r}\left(r-\phi-m_1-d_1+\frac{\theta\phi}{\theta+m_3+d_3}\right) \ \text{and}\\
V_1 &= \frac{\phi k}{r(\theta+m_3+d_3)} \left(r-\phi-m_1-d_1+\frac{\theta\phi}{\theta+m_3+d_3}\right).
\end{split}
\end{equation*}
\item [(iii)] Interior equilibrium, $E^{(2)}=(S_2,V_2,P_2)$, where 
\begin{equation}\label{eqn1}
\begin{split}
S_2 &= \frac{(d_4 -p_3 q_3 V_2)}{p_1 q_1},\\
V_2 &= \frac{\phi d_4}{(\theta p_1 q_1 + d_3 p_1 q_1 + m_3 p_1 q_1 + \phi p_3 q_3 + p_1 p_3 q_1 P_2)}
\end{split}
\end{equation}
and $P_2$ is governed by
\begin{eqnarray}\label{eqn2}
\begin{aligned}
& \left(-\frac{r d_4 (\theta + d_3 + m_3 + p_3 P_2)^2}{k}\right) + [r \theta + (r - \phi) (d_3 + m_3) - (\theta + d_3 +m_3)(d_1 + m_1 + p_1 P_2)\\& - P_2 (-r + \phi + d_1 +m_1 + p_1 P_2) p_3][p_1 (\theta + d_3 + m_3+ p_3 P_2) q_1 +\phi p_3 q_3]=0.
\end{aligned}
\end{eqnarray}
\end{enumerate}
\subsection{Existence of equilibria and stability for disease free model}
To analyze the disease free model, we use the variational matrix which is given as:
$$J'=
\begin{pmatrix}
r-\frac{2rS}{k}-\phi-p_1P-m_1-d_1 & \theta & -p_1S\\
\phi & -\theta-p_3P-m_3-d_3 & -p_3V\\
q_1 p_1 P & q_3 p_3 P & q_1 p_1 S+q_3 p_3 V-d_4
\end{pmatrix}.$$
\subsubsection{Trivial equilibrium $(E^{(0)})$}
The trivial equilibrium $(E^{(0)}(0,0,0))$ always exists.
The jacobian matrix evaluated at $(E^{(0)})$ is
$$J'(E^{(0)})=
\begin{pmatrix}
r-\phi-m_1-d_1 & \theta & 0\\
\phi & -\theta-m_3-d_3 & 0\\
0 & 0 & -d_4
\end{pmatrix}.$$
The characteristic polynomial corresponding to $J'(E^{(0)})$ is
\begin{equation}\label{e1}
(-\lambda - d_4) (-\theta \phi - (r- \phi - d_1 - m_1- \lambda) (\lambda + \theta + d_3 + m_3))=0.
\end{equation}
One of the eigenvalues of $J'(E^{(0)})$ is $(-d_4)$ and the remaining two roots of \eqref{e1} will be analyzed by the quadratic equation given as:
\begin{equation*}
 (\lambda-r+ \phi + d_1 + m_1 )(\lambda + \theta + d_3 + m_3)-\theta \phi=0.
\end{equation*}
Now by Routh-Hurwitz criterion, $(E^{(0)})$ is locally stable whenever the following conditions are satisfied:
\begin{equation}\label{c1}
\begin{split}
(-r + \theta + \phi + d_1+ d_3 + m_1 + m_3)>0, \\
(-r+ \phi + d_1 + m_1)(\theta + d_3 + m_3)-\theta \phi>0.
\end{split}
\end{equation}
\subsubsection{Predator-free equilibrium $(E^{(1)})$}
The Predator-free equilibrium $(E^{(1)}(S_1,V_1,0))$ exists when the following condition is satisfied:
\begin{equation*}
\left(r-\phi-d_1-m_1+\frac{\theta \phi }{\theta+d_3+m_3}\right)>0.
\end{equation*}
The jacobian matrix evaluated at $(E^{(1)})$ is
$$J'(E^{(1)})=
\begin{pmatrix}
r-\frac{2rS_1}{k}-\phi-m_1-d_1 & \theta & -p_1S_1\\
\phi & -\theta-m_3-d_3 & -p_3V_1\\
0 & 0 & q_1 p_1 S_1+q_3 p_3 V_1-d_4
\end{pmatrix}.$$
The characteristic equation corresponding to $J'(E^{(1)})$ is
\begin{equation}\label{e2}
\frac{1}{k} (-\lambda - d_4 + p_1 q_1 S_1 + p_3 q_3 V_1)[(2 r S_1 + k (-r + \lambda + \phi+d_1 + m_1)) (\lambda + \theta + d_3 + m_3)-k \theta \phi] =0.
\end{equation}
One eigenvalue of $J'(E^{(1)})$ is $\lambda_1= - d_4 +p_1 q_1 S_1 + p_3 q_3 V_1$ and the remaining two roots of the characteristic equation \eqref{e2} will be given by the quadratic equation written as:
$$ \left( \lambda+\frac{2 r S_1}{k} -r  + \phi + d_1 + m_1\right) (\lambda + \theta + d_3 + m_3)-\theta \phi =0. $$
By using Routh-Hurwitz criterion, $(E^{(1)})$ is locally stable provided the following conditions are satisfied:
\begin{align*}
\left(-r + \frac{2 r S_1}{k} + \theta + \phi + d_1 + d_3 + m_1 + m_3\right)>0,\\
\left(\frac{2 r S_1}{k}-r+\phi+d_1+m_1\right)(\theta+d_3+m_3)-\theta \phi >0 
\end{align*}
and $\lambda_1= (- d_4 +p_1 q_1 S_1 + p_3 q_3 V_1)<0$, where $S_1>0$ and $V_1>0$.
\subsubsection{Interior equilibrium $(E^{(2)})$} \label{inteq}
The interior equilibrium $(E^{(2)}(S_2,V_2,P_2))$ exists if the following conditions are satisfied:
$$(d_4 -p_3 q_3 V_2)>0 \ \text{and $P_2$ is the positive root of equation \eqref{eqn2}}.$$
The jacobian matrix corresponding to interior equilibrium is
$$J'(E^{(2)})=
\begin{pmatrix}
\alpha_{11} &  \alpha_{12} & \alpha_{13}\\
\alpha_{21} &  \alpha_{22} & \alpha_{23}\\
\alpha_{31} &  \alpha_{32} & \alpha_{33}
\end{pmatrix}$$
where \\
$ \alpha_{11} = r-\frac{2 r S_2}{k} -\phi -p_1 P_2 - m_1-d_1, \quad \alpha_{12}= \theta, \hspace{1.48in} \alpha_{13} =  -p_1 S_2, \\
\alpha_{21}= \phi, \hspace{2.08in} \alpha_{22} = -\theta - p_3 P_2 -m_3-d_3, \quad \alpha_{23}= -p_3 V_2, \\
\alpha_{31}= q_1 p_1 P_2, \hspace{1.73in} \alpha_{32}=q_3 p_3 P_2,\hspace{1.12in}  \alpha_{33} = q_1 p_1 S_2+ q_3 p_3 V_2-d_4.$\\
\newline
The characteristic equation of the above matrix is given by:
\begin{equation} \label{eq7}
\lambda^3 + B_1 \lambda^2 + B_2 \lambda + B_3=0,
\end{equation}
where
\begin{align*}
B_1 &= -tr(A) = -(\alpha_{11} + \alpha_{22} + \alpha_{33}),\\
B_2 &= \text{Sum of the second order principal minors}\\ &=
(\alpha_{11}\alpha_{22} - \alpha_{12}\alpha_{21})+(\alpha_{11}\alpha_{33}-\alpha_{13}\alpha_{31})+(\alpha_{22}\alpha_{33}-\alpha_{23}\alpha_{32}),\\
B_3 &= -det(A) = -[\alpha_{11}(\alpha_{22} \alpha_{33} -\alpha_{32} \alpha_{23})+\alpha_{12}(\alpha_{31} \alpha_{23}-\alpha_{33} \alpha_{21}) + \alpha_{13}(\alpha_{21}\alpha_{32}-\alpha_{31}\alpha_{22})]
\end{align*}
which can be seen in \cref{Appendix:a1}.\\
Thus, from Routh-Hurwitz criterion, $(E^{(2)})$ is locally stable when the following conditions are satisfied:\\
\begin{equation}\label{c2}
\begin{cases}
B_1 B_2 > B_3,\\
B_i > 0, \ i=1,2,3.
\end{cases}
\end{equation}
 \section{\textbf{Equilibria and their stability of main model}}\label{stability}
The equilibrium points of the system \eqref{eq1} are as follows:
\begin{enumerate}
\item Trivial equilibrium $E_0(0,0,0,0)$.
\item Disease-free equilibrium $E_1(S_1,0,V_1,0)$,
where\begin{equation*}
\begin{split}
S_1 &= \frac{k}{r}\left(r-\phi-m_1-d_1+\frac{\theta\phi}{\theta+m_3+d_3}\right) \ \text{and}\\
V_1 &= \frac{\phi k}{r(\theta+m_3+d_3)} \left(r-\phi-m_1-d_1+\frac{\theta\phi}{\theta+m_3+d_3}\right).
\end{split}
\end{equation*}
\item Equilibrium $E_2(S_2,0,V_2,P_2)$,
where $S_2,V_2$ and $P_2$ are defined by equations \eqref{eqn1} and \eqref{eqn2}, respectively.
\item Equilibrium $E_3(0,I_3,0,P_3)$, where
\begin{equation*}
\begin{aligned}
I_3&= \frac{d_4}{q_2 p_2},\\
P_3&= -\frac{c+d_2+m_2}{p_2}.
\end{aligned}
\end{equation*}
\item Predator-free equilibrium $E_4(S_4,I_4,V_4,0)$, where
\begin{equation*}
\begin{split}
S_4 &= \frac{(c + d_2 + m_2) (\theta  + d_3 + m_3+ \sigma I_4)}{\beta \theta + \sigma \phi + \beta d_3+ \beta m_3+ \beta \sigma I_4},\\
V_4 &= \frac{ \phi(c+d_2+m_2)}{\beta \theta  + \sigma \phi + \beta d_3+ \beta m_3+ \beta \sigma I_4}
\end{split}
\end{equation*}
and $I_4$ is the root of
\begin{align} \label{eqi}
g(I_4)=&\theta \phi- (\theta + d_3 + m_3+ I_4 \sigma)\times \nonumber \\ &\left(\phi + d_1 + m_1+\beta I_4 + \frac{r}{k} 
\left(k - I_4 - \frac{(c + d_2 + m_2) (\theta  +d_3 + m_3+ I_4 \sigma)}{ \sigma \phi+\beta (\theta + d_3+ m_3+ \sigma  I_4)}\right)\right).
\end{align}
\item Interior equilibrium $E_5(S_5,I_5,V_5,P_5)$, where $S_5, I_5, V_5 $ and $P_5$ are defined as:
\begin{equation}
\begin{split}
I_5 &= \frac{d_4 - p_1 q_1 S_5 - p_3 q_3 V_5}{p_2 q_2}, \\
V_5 &= \frac{P}{Q}, \\
P_5 &= \frac{\beta S_5 +\sigma V_5-c- d_2- m_2}{p_2},
\end{split}
\end{equation} 
where
\begin{align}\label{e3}
P =& r d_4 S_5 + k \beta d_4 S_5 -r p_1 q_1 S_5^2 -k \beta p_1 q_1 S_5^2 -c k p_1 q_2 S_5+k \beta p_1 q_2 S_5^2 - k d_2 p_1 q_2 S_5\nonumber\\ &-k m_2 p_1 q_2 S_5 -k r p_2 q_2 S_5 + r p_2 q_2 S_5^2+k \phi p_2 q_2 S_5 +k d_1 p_2 q_2 S_5 +k m_1 p_2 q_2 S_5, \\
Q =& -k \sigma p_1 q_2 S_5+k \theta p_2 q_2 +r p_3 q_3 S_5 + k \beta p_3 q_3 S_5 \nonumber
\end{align}
and $S_5$ is the zero of 
\begin{align}\label{e4}
h(S_5)=& \phi p_2 [k q_2(-\sigma p_1 S_5+ \theta p_2) +(r + k \beta) p_3 q_3 S_5]^2- [(r + k \beta) (d_4 - p_1 q_1 S_5)+ q_2\{-k p_1(c -\beta S_5 \nonumber \\
&+ d_2 + m_2) + p_2(r S_5 + k (-r + \phi) + k (d_1 + m_1))\}][\sigma d_4 (-k S_5 \sigma p_1 + k \theta p_2+ p_3 S_5 (r + k \beta))\nonumber \\
&+ k S_5^2 \sigma^2 p_1^2 q_1 +p_2 q_2(k \theta  p_2(\theta + d_3 + m_3)+ p_3(-c k \theta + S_5 (k \beta \theta + r S_5 \sigma+ k \sigma (-r + \phi))+  \nonumber \\ 
&k S_5 \sigma (d_1 + m_1) -k \theta (d_2 + m_2)))+S_5 p_3 ((r \theta + k \beta \theta + k r \sigma -r S_5 \sigma- k \sigma \phi - k \sigma d_1 + m_1) \nonumber \\
&+ r (d_3 + m_3)+ k \beta (d_3 + m_3)) p_2 - (r + k \beta) (c - S_5 \beta + d_2 + m_2) p_3) q_3+  S_5 \sigma p_1 (k p_2 (-\theta q_1\nonumber \\
&- (\theta + d_3 + m_3) q_2) +p_3(-S_5 (r + k \beta) q_1 + k q_3 (c - S_5 \beta + d_2 + m_2)))].
\end{align}
\end{enumerate}

\subsection{Existence conditions of equilibria}
The existence conditions of equilibrium points are as follows:
\begin{enumerate}
\item [(i)]Trivial equilibrium $(E_0)$ always exists.
\item [(ii)]Equilibrium $(E_1)$ exists if the  following condition is satisfied:
\begin{equation*}
\left(r-\phi-d_1-m_1+\frac{\theta \phi }{\theta+d_3+m_3}\right)>0.
\end{equation*}
\item [(iii)]Equilibrium $(E_2)$ exists whenever the following conditions are satisfied:
\begin{equation*}
(d_4 -p_3 q_3 V_2)>0
\end{equation*}
and $P_2$ is the positive root of equation \eqref{eqn2}. 
\item [(iv)] Equilibrium $(E_3)$ does not exist.
\item [(v)] Equilibrium $(E_4)$ exists when $I_4$ is the positive root of equation \eqref{eqi}.
\item [(vi)] Equilibrium $(E_5)$ exists provided the following conditions are satisfied:
\begin{align*}
&(d_4 - p_1 q_1 S_5 - p_3 q_3 V_5)>0,\\
&(\beta S_5 +\sigma V_5-c- d_2- m_2)>0,
\end{align*}
$S_5$ is the positive root of equation \eqref{e4} and one of these two conditions are satisfied (not simultaneously):
$P > 0, Q < 0$ where $P$ and $Q$ are defined in eq. \eqref{e3}.
\end{enumerate}
\subsection{Stability analysis of equilibria}
To analyze the stability of equilibrium points, we use the jacobian matrix of system \ref{eq1} which is given by
\begin{center}
$J = 
\begin{pmatrix}
A_{11} & -\frac{rS}{k}-\beta S & \theta & -p_1 S\\
\beta I & A_{22} & \sigma I & -p_2 I\\
\phi & -\sigma V & A_{33} & -p_3 V\\
q_1 p_1 P & q_2 p_2 P & q_3 p_3 P & A_{44}
\end{pmatrix},$ \end{center}
where
\begin{equation*}
\begin{split}
A_{11} &= r-2r\frac{S}{k}-\frac{rI}{k}-\beta I -\phi -p_1 P - m_1-d_1,\\
A_{22} &= \beta S + \sigma V -p_2 P - m_2-d_2-c,\\
A_{33} &=-\sigma I -\theta - p_3 P -m_3-d_3,\\
A_{44} &=q_1 p_1 S+q_2 p_2 I + q_3 p_3 V-d_4.
\end{split}
\end{equation*}
\subsubsection{Trivial equilibrium point $(E_0)$}
The jacobian matrix evaluated at $E_0(0,0,0,0)$ is
\begin{center}
$J(E_0) = 
\begin{pmatrix}
r-\phi- m_1-d_1 & 0 & \theta & 0\\
0 & - m_2-d_2-c & 0 & 0\\
\phi & 0 & -\theta  -m_3-d_3 & 0\\
0 & 0 & 0 & -d_4
\end{pmatrix}.$ \end{center}
The characteristic equation corresponding to $J(E_0)$ is
\begin{equation} \label{eq4}
(-\lambda-d_4)(-\lambda-c-d_2-m_2)[-\theta \phi-(r-\lambda-\phi-d_1-m_1)(\lambda+\theta+d_3+m_3)]=0.
\end{equation}
Two eigenvalues of $J(E_0)$ are $-d_4 , - (c+d_2+m_2)$ and the remaining two roots of the characteristic equation \eqref{eq4} will be analyzed by the quadratic equation 
\begin{equation*}
 \lambda^2+A_1 \lambda +A_2=0
\end{equation*}
where,
\begin{equation}\label{c3}
\begin{split}
A_1&=-r + \theta+ \phi + d_1 + d_3 + m_1 + m_3,\\
A_2&=-r \theta + \theta d_1 - r d_3 + \phi d_3+ d_1 d_3 + \theta m_1 + d_3 m_1 - r m_3 + \phi m_3 +d_1 m_3 + m_1 m_3.
\end{split}
\end{equation}
For $A_1>0$ and $A_2>0$, trivial equilibrium $(E_0)$ is locally stable by Routh-Hurwitz criterion.
\subsubsection{Disease-free equilibrium $(E_1)$}
The jacobian matrix corresponding to $E_1(S_1,0,V_1,0)$ will be
$$J(E_1)=
\begin{pmatrix}
r-\frac{2 r S_1}{k}- \phi - d_1- m_1 & -S_1(\frac{r}{k}+\beta) & \theta & -p_1 S_1\\
0 & \beta S_1 + \sigma V_1- m_2-d_2-c & 0 & 0\\
\phi & -\sigma V_1 & -\theta -m_3-d_3 & -p_3 V_1\\
0 & 0 & 0 & q_1 p_1 S_1 + q_3 p_3 V_1-d_4
\end{pmatrix}. $$
The characteristic equation of this matrix is 
\begin{equation}\label{eq5}
\begin{split}
&\frac{1}{k}(-c + \beta S_1 +  \sigma V_1- d_2- m_2- \lambda) (-\lambda - d_4 + p_1 q_1 S_1 + p_3 q_3 V_1)\\&[(2 r S_1 + k (-r + \lambda + \phi+ d_1 + m_1)) (\lambda + \theta + d_3 + m_3)-k \theta \phi] =0.
\end{split}
\end{equation}
Two eigenvalues of $J(E_1)$ are:
\begin{equation}\label{c4}
\begin{split}
\lambda_1 &= -c - d_2- m_2 +\beta S_1+  \sigma V_1= (c+d_2+m_2) (\mathcal{R}_0-1), \\
\lambda_2 &= - d_4 +p_1 q_1 S_1 + p_3 q_3 V_1 
\end{split}
\end{equation}
and the remaining two roots of the equation \eqref{eq5} will be analyzed by solving the quadratic equation
$$ \left( \lambda+\frac{2 r S_1}{k} -r  + \phi + d_1 + m_1\right) (\lambda + \theta + d_3 + m_3)-\theta \phi =0. $$
\noindent
Now, by using Routh-Hurwitz criterion, disease-free equilibrium $(E_1)$ is locally stable whenever the following conditions are satisfied:\\
\begin{align*}
\left(-r + \frac{2 r S_1}{k} + \theta + \phi + d_1 + d_3 + m_1 + m_3\right) >0,\\
\left(\frac{2 r S_1}{k}-r+\phi+d_1+m_1\right)(\theta+d_3+m_3)-\theta \phi >0
\end{align*}
together with the conditions $\lambda_1<0$ $( \text{or} \ \mathcal{R}_0 <1)$ and $\lambda_2<0$, where $S_1>0$ and $V_1>0$.
\subsubsection{Equilibrium $(E_2)$}
The Jacobian matrix evaluated at $E_2(S_2,0,V_2,P_2)$ is given by:
$$J(E_2) = 
\begin{pmatrix}
a_{11} & -\frac{rS_2}{k}-\beta S_2 & \theta & -p_1 S_2\\
0 & a_{22} & 0 & 0\\
\phi & -\sigma V_2 & a_{33} & -p_3 V_2\\
q_1 p_1 P_2 & q_2 p_2 P_2 & q_3 p_3 P_2 & a_{44}
\end{pmatrix},$$
where \\
$ a_{11} = r-\frac{2 r S_2}{k} -\phi -p_1 P_2 - m_1-d_1,\\
a_{22} =\beta S_2 + \sigma V_2 -p_2 P_2 - m_2-d_2-c,\\ a_{33} = -\theta - p_3 P_2 -m_3-d_3,\\
a_{44} = q_1 p_1 S_2+ q_3 p_3 V_2-d_4.$\\
\newline
One eigenvalue of matrix $J(E_2)$ is
$a_{22}=-c  - d_2 - m_2 + \beta S_2 +  \sigma V_2 -p_2 P_2$ and the remaining eigenvalues of this matrix will be given by the eigenvalues of the matrix $A$ defined as
$$A = 
\begin{pmatrix}
r-\frac{2 r S_2}{k} -\phi -p_1 P_2 - m_1-d_1 &  \theta & -p_1 S_2\\
\phi &   -\theta - p_3 P_2 -m_3-d_3 & -p_3 V_2\\
q_1 p_1 P_2 & q_3 p_3 P_2 & q_1 p_1 S_2+ q_3 p_3 V_2-d_4
\end{pmatrix},$$
which is similar to the matrix which we have discussed in subsection \ref{inteq}.
Thus, from Routh-Hurwitz criterion, equilibrium $(E_2)$ is locally stable provided the following conditions are satisfied:
\begin{equation*}
\left\{
\begin{array}{ll}
B_1 B_2 > B_3,\\
B_i > 0, \ i=1,2,3 \; \text{and}\\
(-c + \beta S_2 + \sigma V_2 - d_2 - m_2 -p_2 P_2)<0
\end{array}
\right.
\end{equation*}
where $S_2, V_2$ and $P_2$ are all positive.
\subsubsection{Predator-free equilibrium $(E_4)$}
The jacobian matrix evaluated at equilibrium $E_4(S_4,I_4,V_4,0)$ is given by:
$$J(E_4) = 
\begin{pmatrix}
b_{11} & -\frac{rS_4}{k}-\beta S_4 & \theta & -p_1 S_4\\
\beta I_4 & b_{22} & \sigma I_4 & -p_2 I_4\\
\phi & -\sigma V_4 & b_{33} & -p_3 V_4\\
0 & 0 & 0 & b_{44}
\end{pmatrix},$$
where\\
$
b_{11} = r-2r\frac{S_4}{k}-\frac{rI_4}{k}-\beta I_4 -\phi - m_1-d_1,\\
b_{22} = \beta S_4 + \sigma V_4  - m_2-d_2-c,\\
b_{33} = -\sigma I_4 -\theta -m_3-d_3,\\
b_{44} = q_1 p_1 S_4 +q_2 p_2 I_4 + q_3 p_3 V_4 -d_4.
$\\
\newline
One eigenvalue of $J(E_4)$ is $b_{44} = q_1 p_1 S_4 +q_2 p_2 I_4 + q_3 p_3 V_4 -d_4$ and the remaining three eigenvalues are given by the eigenvalues of the matrix $B$ written as:
$$B=
\begin{pmatrix}
\kappa_{11} &  \kappa_{12} & \kappa_{13}\\
\kappa_{21} &  \kappa_{22} & \kappa_{23}\\
\kappa_{31} & \kappa_{32} & \kappa_{33}
\end{pmatrix}$$
where \\
$ \kappa_{11} = r-2r\frac{S_4}{k}-\frac{rI_4}{k}-\beta I_4 -\phi - m_1-d_1, \quad \quad
\kappa_{12}= -\frac{rS_4}{k}-\beta S_4,\quad \quad \kappa_{13} = \theta,\\
\kappa_{21}= \beta I_4, \quad \quad
\kappa_{22} = \beta S_4 + \sigma V_4  - m_2-d_2-c, \quad \quad
\kappa_{23}= \sigma I_4, \\
\kappa_{31}= \phi, \quad \quad
\kappa_{32}= -\sigma V_4,\quad \quad
\kappa_{33} = -\sigma I_4 -\theta -m_3-d_3.$\\
\newline
The characteristic equation of the above matrix is given by:
\begin{equation} \label{eq8}
\lambda^3 + C_1 \lambda^2 + C_2 \lambda + C_3=0,
\end{equation}
where
\begin{align*}
C_1 &= -tr(B) = -(\kappa_{11} + \kappa_{22} + \kappa_{33}),\\
C_2 &= (\kappa_{11}\kappa_{22} - \kappa_{12}\kappa_{21})+(\kappa_{11}\kappa_{33}-\kappa_{13}\kappa_{31})+(\kappa_{22}\kappa_{33}-\kappa_{23}\kappa_{32}),\\
C_3 &= -det(B)
= -[\kappa_{11}(\kappa_{22} \kappa_{33} -\kappa_{32} \kappa_{23})+\kappa_{12}(\kappa_{31} \kappa_{23}-\kappa_{33} \kappa_{21}) + \kappa_{13}(\kappa_{21}\kappa_{32}-\kappa_{31}\kappa_{22})]
\end{align*}
which can be seen in \cref{Appendix:a2}.\\
Now by using Routh-Hurwitz criterion, predator-free equilibrium $(E_4)$ is locally stable provided the following conditions are satisfied:
\begin{equation}\label{c5}
\left\{
\begin{array}{ll}
C_1 C_2 > C_3,\\
C_i > 0, \ i=1,2,3 \; \text{and}\\
(q_1 p_1 S_4 +q_2 p_2 I_4 + q_3 p_3 V_4 -d_4)<0
\end{array}
\right.
\end{equation}
where $S_4, I_4$ and $V_4$ are all positive.
\subsubsection{Interior Equilibrium $(E_5)$}
The jacobian matrix evaluated at interior equilibrium $E_5(S_5,I_5,V_5,P_5)$ is given by:
\begin{center}
$J(E_5) = 
\begin{pmatrix}
c_{11} & c_{12} & c_{13} & c_{14}\\
c_{21} & c_{22} & c_{23} & c_{24}\\
c_{31} & c_{32} & c_{33} & c_{34} \\
c_{41} & c_{42} & c_{43} & c_{44}
\end{pmatrix},$ \end{center}
where
\begin{align*}
c_{11} &= r-2r\frac{S_5}{k}-\frac{rI_5}{k}-\beta I_5 -\phi -p_1 P_5 - m_1-d_1,\quad
c_{12} = -\left(\frac{r}{k}+\beta\right) S_5, \quad
c_{13} = \theta,\;
c_{14} = -p_1 S_5,\\
c_{21} &= \beta I_5,\quad \quad
c_{22} = \beta S_5 + \sigma V_5 -p_2 P_5 - m_2-d_2-c,\quad \quad
c_{23} = \sigma I_5,\quad \quad
c_{24} = -p_2 I_5,\\
c_{31} &= \phi,\quad \quad
c_{32} = -\sigma V_5,\quad \quad
c_{33} =-\sigma I_5 -\theta - p_3 P_5 -m_3-d_3,\quad \quad
c_{34} = -p_3 V_5,\\
c_{41} &= q_1 p_1 P_5,\quad \quad
c_{42} = q_2 p_2 P_5,\quad \quad
c_{43} = q_3 p_3 P_5,\quad \quad
c_{44} =q_1 p_1 S_5 +q_2 p_2 I_5 + q_3 p_3 V_5 -d_4.
\end{align*}
The characteristic equation of the above matrix is written as:
\begin{equation}\label{eq9}
\lambda^4+D_1 \lambda^3 + D_2 \lambda^2 + D_3 \lambda+ D_4=0
\end{equation}
where\\
$D_1 = -tr(J(E_5)),\\
D_2 = \text{Sum of all the possible second order principal minors},\\ 
D_3 = -(\text{Sum of all the possible third order principal minors}),\\
D_4 = det(J(E_5))$\\
which can be seen in \cref{Appendix:a3}. Therefore, from Routh-Hurwitz criterion, we can conclude that equilibrium $(E_5)$ is stable provided the following conditions are satisfied:
\begin{equation*}
\left\{
\begin{array}{ll}
D_i > 0, \ i=1,2,3,4,\\
D_1 D_2 > D_3 \; \text{and}\\
D_1(D_2 D_3 - D_1 D_4) - D_3^2>0.
\end{array}
\right.
\end{equation*}
\subsection{Global stability of equilibria}
In this section, we will prove the global stability of the equilibrium points for different 2-D planes.
\begin{thm}
$(E_4)$ is globally asymptotically stable in $S-V$ plane.
\end{thm}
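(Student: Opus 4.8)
The plan is to reduce the four-dimensional flow \eqref{eq1} to a planar one and then apply the Poincar\'e--Bendixson theorem together with Dulac's criterion. First I would note that $\{I=0,\ P=0\}$ is invariant for \eqref{eq1}, since the right-hand sides of the $I$- and $P$-equations each carry a factor $I$ and $P$ respectively; on this set \eqref{eq1} becomes the planar system
\begin{equation*}
\frac{dS}{dt}=rS\left(1-\frac{S}{k}\right)-(\phi+m_1+d_1)S+\theta V=:f(S,V),\qquad \frac{dV}{dt}=\phi S-(\theta+m_3+d_3)V=:g(S,V),
\end{equation*}
which is \eqref{eq3} with $P\equiv 0$. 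Its only equilibria in the closed first quadrant are the origin and the interior point $(S_1,V_1)$ (this is what $(E_4)$ reduces to in the $S$--$V$ plane), the latter existing precisely under the standing condition $r-\phi-d_1-m_1+\theta\phi/(\theta+d_3+m_3)>0$ already imposed for $E_1$. I would also record the boundary behaviour: $\dot S=\theta V>0$ on the open positive $V$-axis and $\dot V=\phi S>0$ on the open positive $S$-axis, so $\operatorname{int}(\mathbb{R}^2_+)$ is positively invariant and orbits on the axes enter it; together with the uniform boundedness already proved for \eqref{eq1}, every forward orbit in $\operatorname{int}(\mathbb{R}^2_+)$ is eventually trapped in a compact subset of $\operatorname{int}(\mathbb{R}^2_+)$.

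Next I would rule out periodic orbits and separatrix cycles by Dulac's criterion with the multiplier $B(S,V)=1/S$, which is smooth on the simply connected set $\operatorname{int}(\mathbb{R}^2_+)$. A short computation gives
\begin{equation*}
\frac{\partial}{\partial S}(Bf)+\frac{\partial}{\partial V}(Bg)=-\frac{r}{k}-\frac{\theta V}{S^{2}}-\frac{\theta+m_3+d_3}{S}<0
\end{equation*}
throughout $\operatorname{int}(\mathbb{R}^2_+)$, so by Green's theorem no closed orbit exists, and no graphic enclosing a region can exist either (in particular there is no homoclinic loop at the origin).

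Finally I would analyze the two equilibria and assemble the argument. At the origin the Jacobian of $(f,g)$ is $\left(\begin{smallmatrix} r-\phi-m_1-d_1 & \theta\\ \phi & -(\theta+m_3+d_3)\end{smallmatrix}\right)$, whose determinant equals $-\big[(r-\phi-m_1-d_1)(\theta+d_3+m_3)+\theta\phi\big]$, which is negative by the existence condition, so the origin is a hyperbolic saddle; since both off-diagonal entries $\theta,\phi$ are positive, the eigenvector belonging to the negative eigenvalue has components of opposite sign, hence the stable manifold of the origin is tangent to a direction leaving $\operatorname{int}(\mathbb{R}^2_+)$ and no orbit starting inside the open quadrant accumulates on the origin. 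At $(S_1,V_1)$, using $\dot S=0$ to substitute $r-rS_1/k-\phi-m_1-d_1=-\theta V_1/S_1$, the Jacobian has trace $-\theta V_1/S_1-rS_1/k-(\theta+m_3+d_3)<0$ and determinant $rS_1(\theta+m_3+d_3)/k>0$, so $(S_1,V_1)$ is a hyperbolic sink. For any orbit in $\operatorname{int}(\mathbb{R}^2_+)$ the $\omega$-limit set is then nonempty, compact and connected, and by Poincar\'e--Bendixson it is a periodic orbit, a single equilibrium, or a graphic; Dulac's criterion discards the first and third, the saddle analysis discards the origin, so it must be $(S_1,V_1)$. Global attractivity of $(S_1,V_1)$ in $\operatorname{int}(\mathbb{R}^2_+)$ together with its being a hyperbolic sink gives global asymptotic stability of $(E_4)$ in the $S$--$V$ plane. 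The main obstacle is not any single computation but the boundary bookkeeping: checking that the Dulac expression is sign-definite on the whole quadrant (not merely at the equilibrium), confirming positive invariance of the open quadrant, and --- the genuinely delicate point --- showing that the stable manifold of the saddle at the origin does not reach into $\operatorname{int}(\mathbb{R}^2_+)$, which is exactly what upgrades an ``almost global'' attraction statement to a global one.
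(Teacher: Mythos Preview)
Your argument is internally coherent and, in several respects, more careful than the paper's own proof (you carry out the full Poincar\'e--Bendixson bookkeeping, check positive invariance of the open quadrant, and analyse the saddle at the origin, whereas the paper stops after applying Dulac). However, it proves the wrong statement: you have analysed $E_1$, not $E_4$.

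Recall that $E_4=(S_4,I_4,V_4,0)$ is the \emph{predator-free} equilibrium with $I_4>0$. By passing to the invariant set $\{I=0,\ P=0\}$ you land on a planar system whose interior equilibrium is $(S_1,V_1)$, the projection of the \emph{disease-free} equilibrium $E_1$; your parenthetical ``this is what $(E_4)$ reduces to in the $S$--$V$ plane'' is simply false, since $E_4\notin\{I=0\}$. What you have written is, in effect, a correct proof of the Remark's item (1) (that $E_1$ is globally asymptotically stable in the $S$--$V$ plane), not of the theorem about $E_4$.

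The paper's proof differs in two ways. First, it does \emph{not} set $I=0$: it keeps the full $S$- and $V$-equations
\[
F_1(S,V)=rS\Bigl(1-\tfrac{S+I}{k}\Bigr)-\beta SI-\phi S+\theta V-m_1S-d_1S,\qquad
F_2(S,V)=\phi S-\theta V-\sigma VI-m_3V-d_3V,
\]
treating $I$ (implicitly $I=I_4$) and $P=0$ as fixed parameters; the ``$S$--$V$ plane'' here is a parametrised slice rather than the invariant set $\{I=0,P=0\}$. Second, it uses the Dulac multiplier $h_1(S,V)=1/(SV)$ instead of your $1/S$, obtaining
\[
\frac{\partial}{\partial S}(h_1F_1)+\frac{\partial}{\partial V}(h_1F_2)=-\frac{r}{kV}-\frac{\theta}{S^{2}}-\frac{\phi}{V^{2}}<0,
\]
an expression in which the $I$-terms have cancelled, so the same computation covers every equilibrium with a nontrivial $(S,V)$-component (this is why the Remark lists $E_1,E_2,E_4,E_5$ simultaneously). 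To repair your argument for $E_4$ you must either retain $I$ as a parameter in $f,g$ and redo the Dulac and equilibrium computations with $I=I_4$, or adopt the paper's multiplier $1/(SV)$; your choice $1/S$ still works once the correct $F_1,F_2$ are used, but the resulting interior equilibrium is $(S_4,V_4)$, not $(S_1,V_1)$, and the Jacobian entries and existence condition change accordingly.
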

\begin{proof}
Let 
\begin{equation*}
\hspace{-3.5in} h_1(S,V)= \frac{1}{SV}
\end{equation*}
It is obvious that $h_1(S,V)>0$ if $S>0$ and $V>0$.\\
Now, we denote
\begin{equation*}
\begin{split}
F_1(S,V)&=r S\left(1-\frac{S+I}{k}\right)-\beta S I-\phi S +\theta V - m_1 S -d_1 S,\\
F_2(S,V)&= \phi S - \theta V-\sigma V I-m_3 V -d_3 V, \\
\Delta(S,V)&= \frac{\partial}{\partial S} [F_1 h_1]+\frac{\partial}{\partial V} [F_2 h_1].
\end{split}
\end{equation*}
Then, \begin{equation*}
\hspace{-2in} \Delta(S,V)= -\frac{r}{kV}-\frac{\theta}{S^2}-\frac{\phi}{V^2}.
\end{equation*}
Thus, $\Delta(S,V)<0$ for all $S>0$ and $V>0$. Therefore, by using Bendixson-Dulac criterion, there will be no periodic orbit in the first quadrant.\\
This completes the proof.
\end{proof}
\begin{rem}
In similar manner, we have observed that the equilibrium points $(E_1)$, $(E_2)$, $(E_4)$ and $(E_5)$  are globally asymptotically stable in different planes as:
\begin{enumerate}
\item[(1)] $E_1$ is globally asymptotically stable in $S-V$ plane.
\item[(2)] $E_2$ is globally asymptotically stable in $S-V$, $S-P$ and $V-P$ planes.
\item[(3)] $E_4$ is globally asymptotically stable in $S-I$ and $I-V$ planes.
\item[(4)] $E_5$ is globally asymptotically stable in $S-I$, $S-V$, $S-P$, $I-V$ and $V-P$ planes.
\end{enumerate}
\end{rem}
\section{\textbf{Numerical simulation}}\label{numericals}
The dynamic behavior of the model around equilibrium points has been seen in previous sections with theoretical results. Now in this section, we have performed some numerical simulations to observe and describe the effect on the dynamics of the system \eqref{eq1}.
For the set of parameters defined by $P_1 = \{r, k, \beta, \phi, \theta, \sigma, c, p_1, p_2, p_3,q_1, q_2, q_3, d_1, d_2, d_3, d_4 \} = \{1.1, 2.9, 1.2, 1.2, 1.2, 0.2, 0.35, 0.125,\\ 0.125, 0.125, 0.75, 0.8, 0.75, 0.25, 0.125, 0.1,  0.25 \}$. 

Some parameter values have been taken from Jana and Kar \cite{JANA}, Hu and Li \cite{HU AND LI} and some are assumed.
In the absence of migration, disease free model \eqref{eq3} with these values will be:
\begin{equation}\label{eq31}
\begin{split}
\frac{dS}{dt}&=(1.1) S\left(1-\frac{S}{2.9}\right)-(1.2) S +(1.2) V-(0.125) P S -(0.25) S, \\
\frac{dV}{dt}&=(1.2) S - (1.2) V- (0.125) P V -(0.1) V,\\
\frac{dP}{dt}&= (0.09375) P S +(0.09375) P V -(0.25) P.
\end{split}
\end{equation}
It has been observed that this system have all the three types of equilibria and they are:\\ $(i) E^{(0)}=(0,0,0), (ii) E^{(1)}=(1.99755,1.84389,0)$ and $(iii) E^{(2)}=(1.44427,1.22239,0.942539)$.
\begin{figure}[h!]
\centering
\includegraphics[scale=0.5]{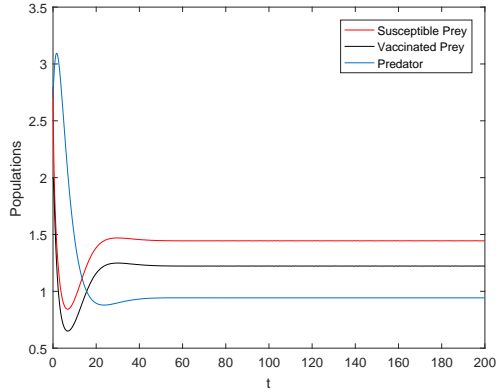}
\caption{Behavior of solutions for the system \eqref{eq31}.}
\label{fig:2}
\end{figure}

Trivial equilibrium $E^{(0)}$ is unstable as one condition of \eqref{c1} is not satisfied, that is, $[(-r+ \phi + d_1 + m_1)(\theta + d_3 + m_3)-\theta \phi]=-0.985 <0$. The predator-free equilibrium $E^{(1)}$ is not stable as one eigenvalue calculated as $-d_4 +p_1 q_1 S_1 + p_3 q_3 V_1=0.110135>0$. Thus, $E^{(1)}$ is also unstable.\\ Now for the interior equilibrium $E^{(2)}$, equation \eqref{eq7} is
\begin{equation}\label{en1}
\lambda^3 + 2.98129 \lambda^2 + 0.806172 \lambda + 0.079073=0.
\end{equation}
It can be seen that all the coefficients of equation \eqref{en1} are positive and the another condition of \eqref{c2} is also satisfied as $(2.98129)(0.806172)= 2.40343 > 0.079073$. Hence, equilibrium $E^{(2)}$ is locally stable. Figures \ref{fig:2}, \ref{fig:3} and \ref{fig:4} give the results corresponding to the system \eqref{eq31}.

\begin{figure}[h!]
\subfloat{\includegraphics[scale=0.35]{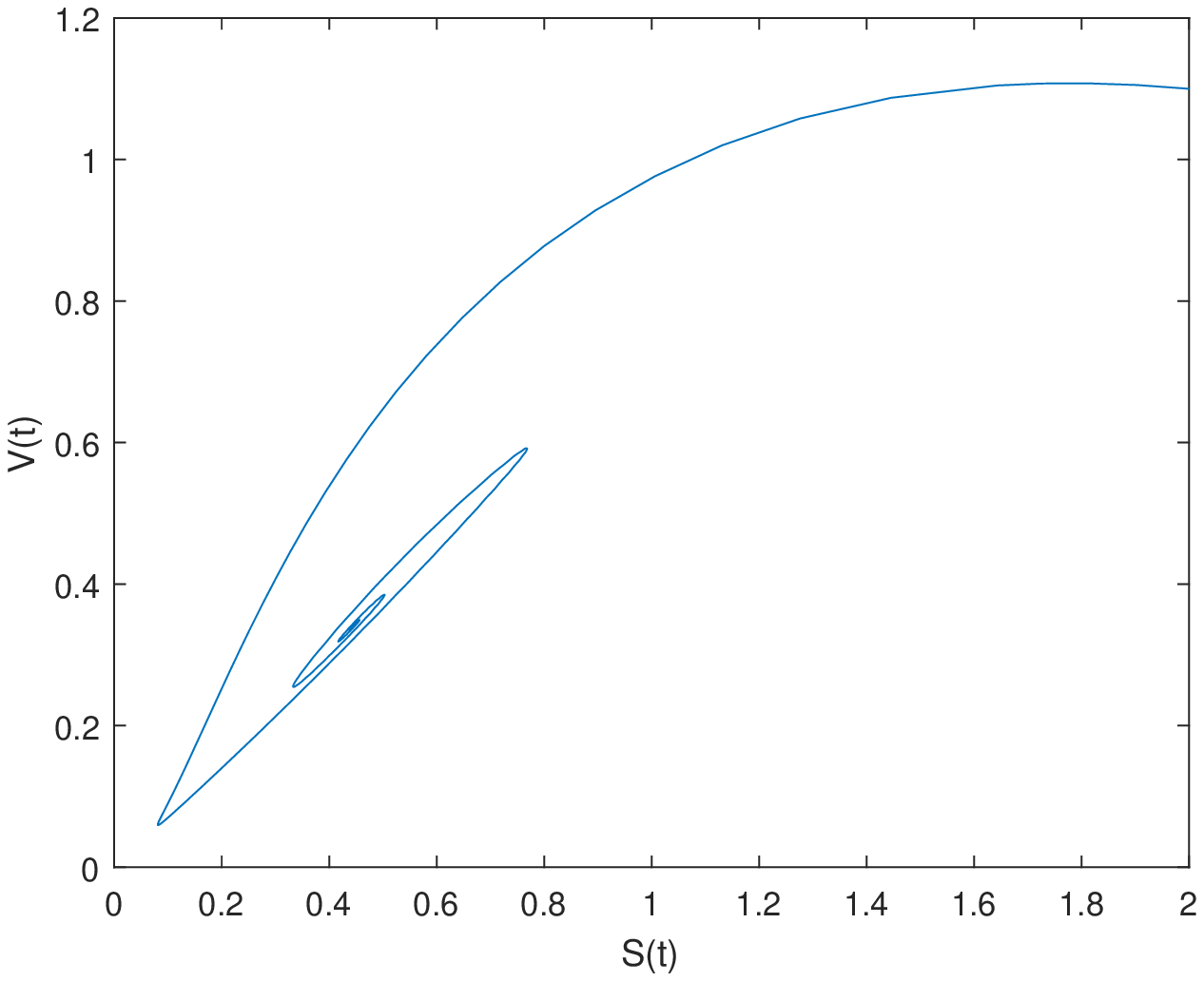}}
\subfloat{\includegraphics[scale=0.35]{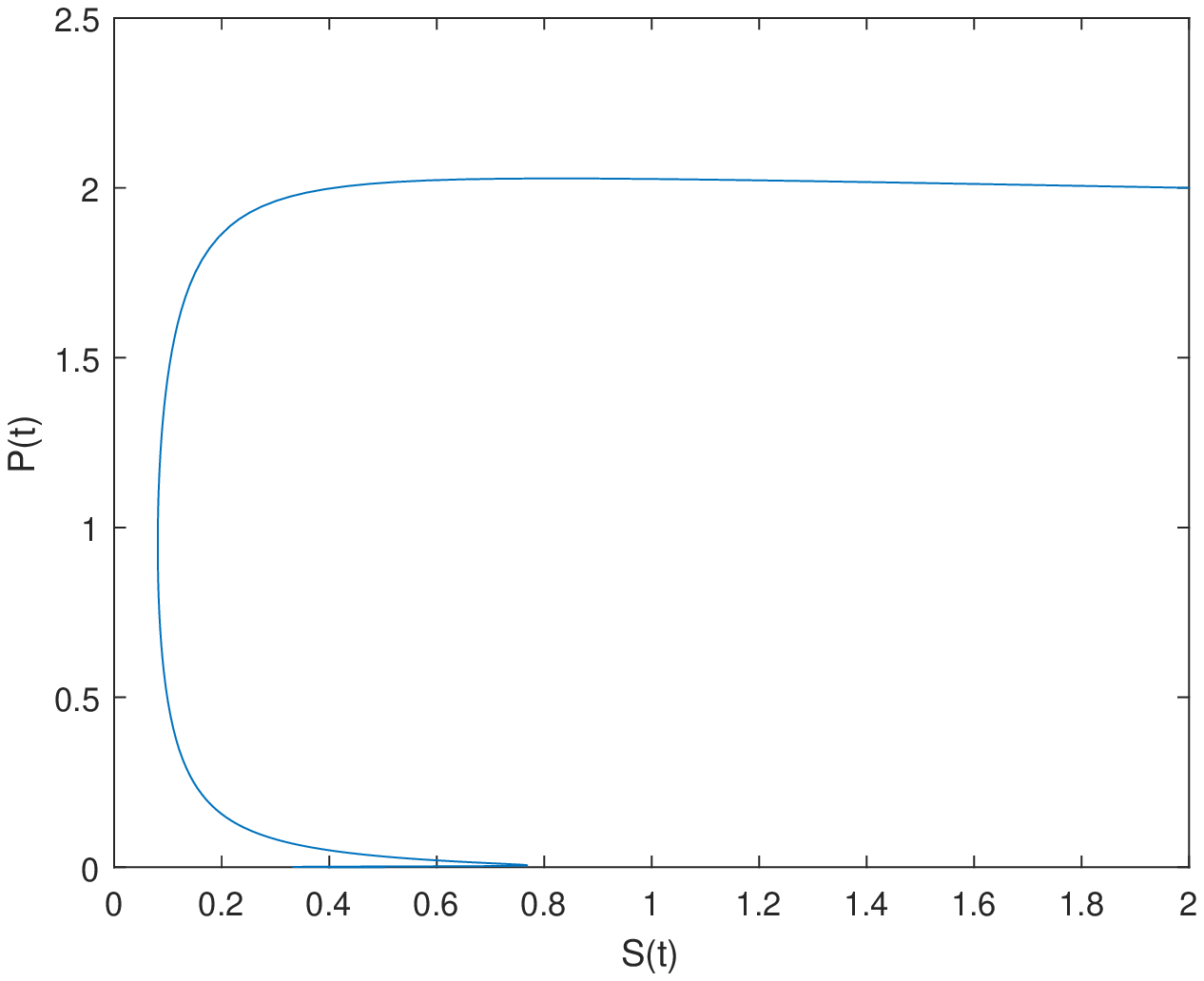}} 
\subfloat{\includegraphics[scale=0.35]{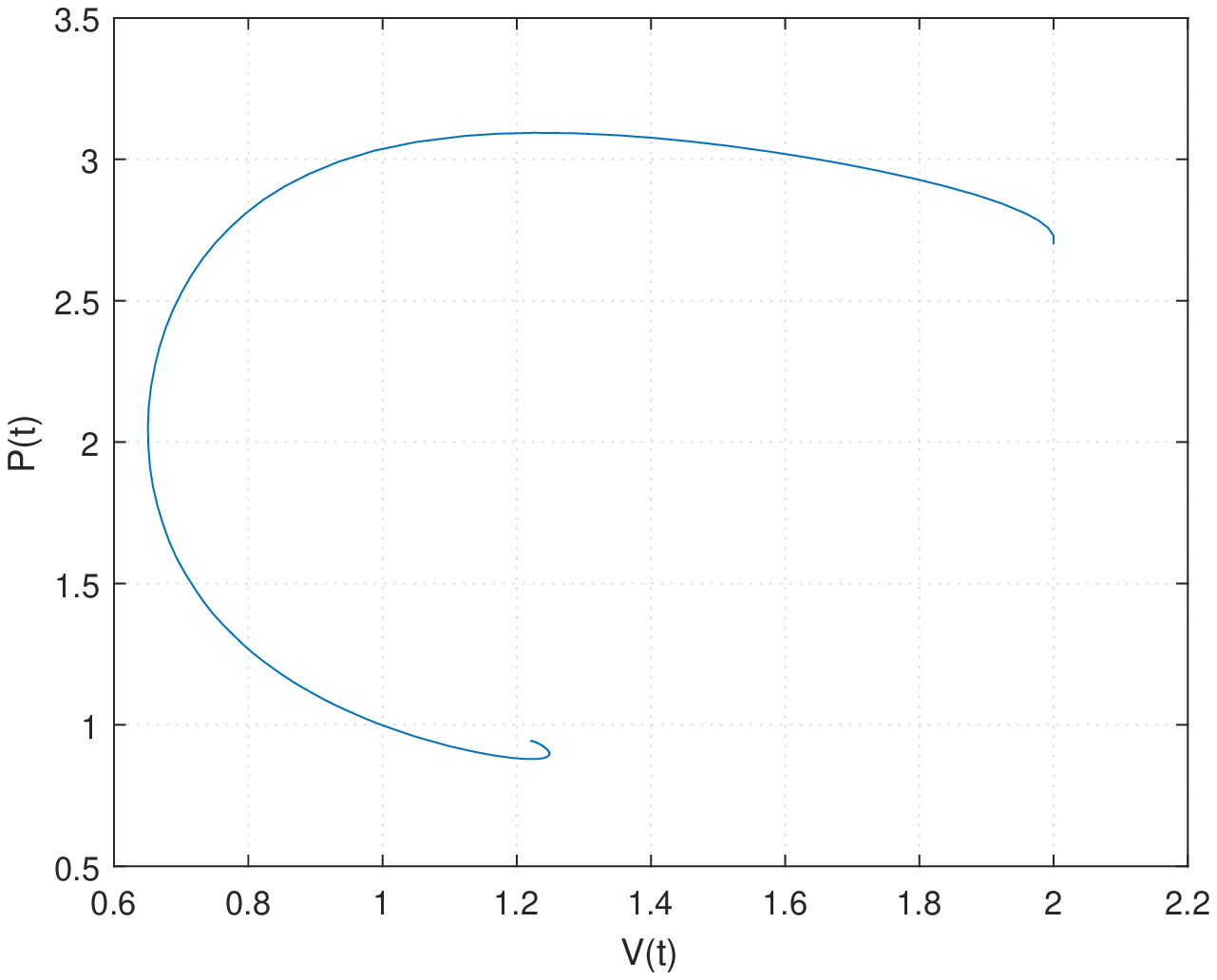}}
\hspace{0mm}
\subfloat{\includegraphics[scale=0.35]{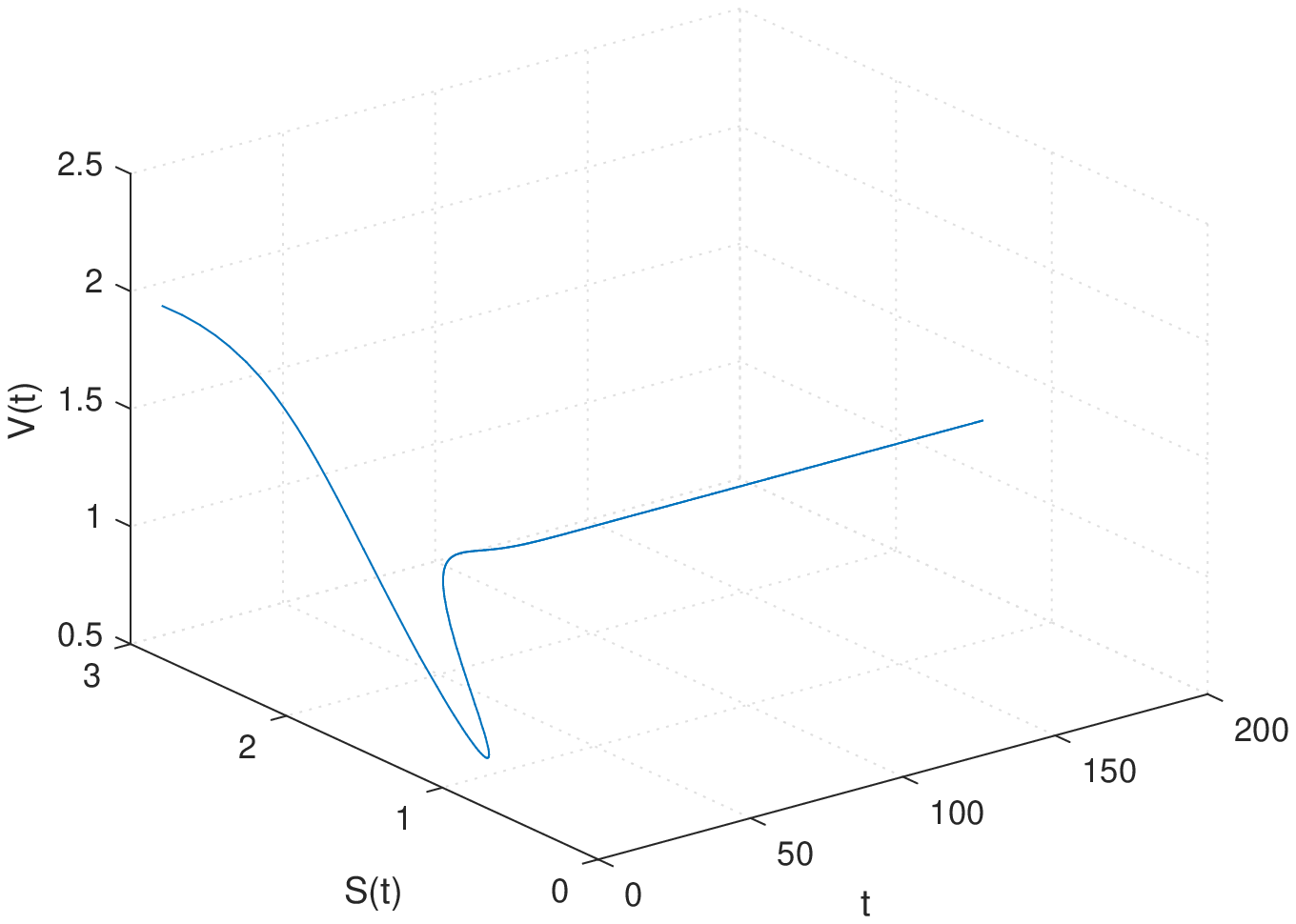}}
\subfloat{\includegraphics[scale=0.35]{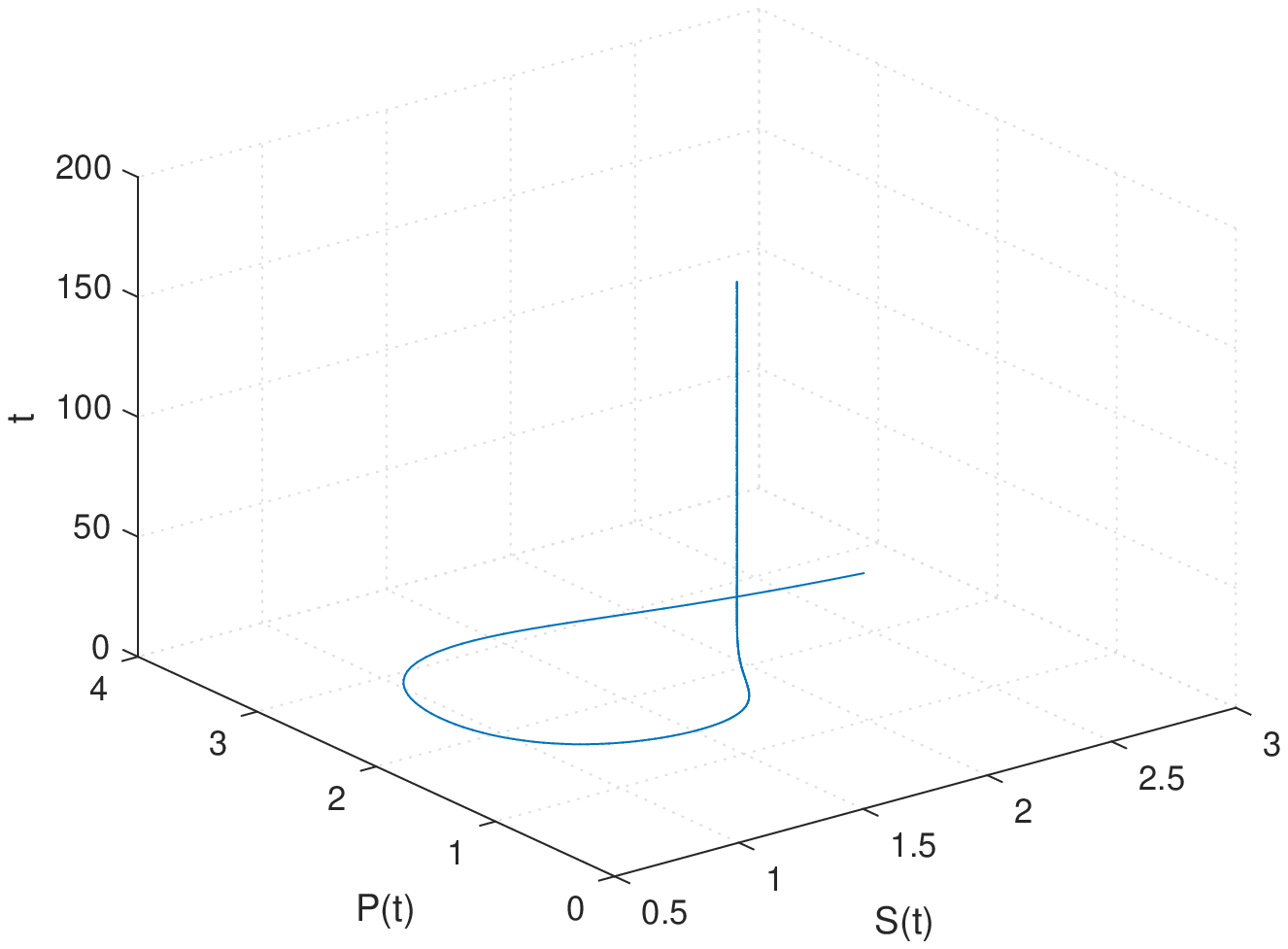}}
\subfloat{\includegraphics[scale=0.35]{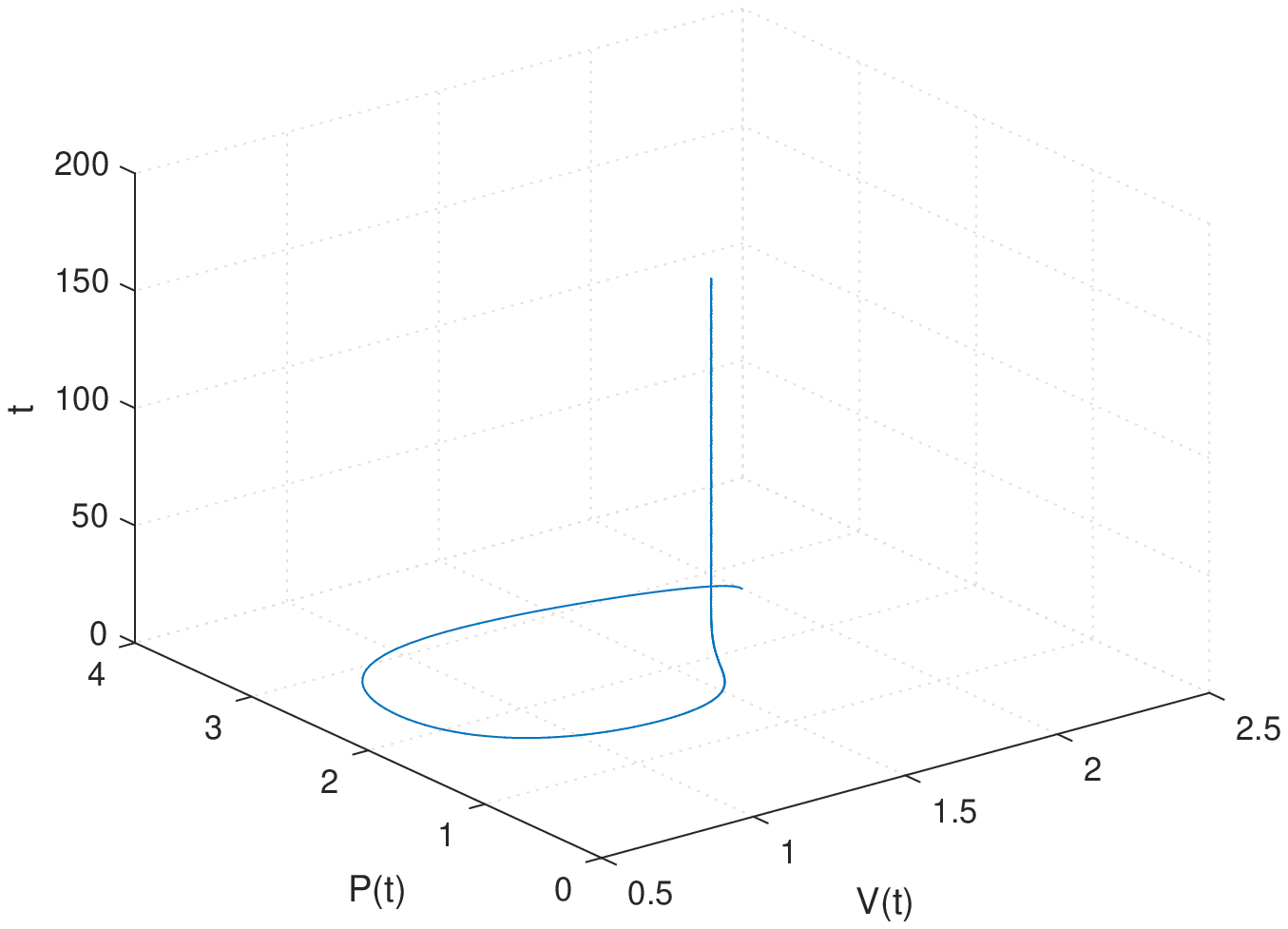}}
\caption{Numerical solutions for the model without disease and migration.}
\label{fig:3}
\end{figure}
\begin{figure}[h!]
\includegraphics[scale=0.5]{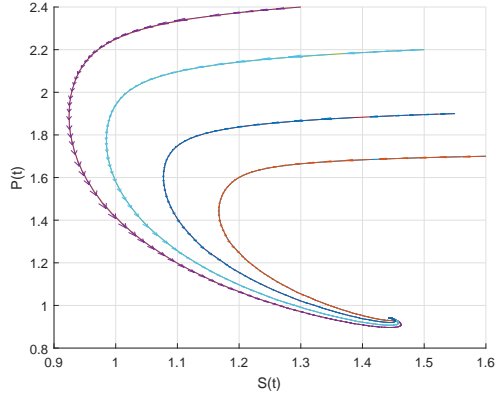}
\caption{Phase portrait showing stability of Interior equilibrium $E^{(2)}$.}
\label{fig:4}
\end{figure}

Now for the set of parameters $P_1$ with $p_1=0.1$ and $p_3=0.1 $, we will analyze our main model by considering two cases. In the first case, we discuss model when there is no migration in prey population. The presence of migration will be discussed in another case with migrating rates in the prey population.
\begin{enumerate}
\item[Case (i)] In the absence of migration, model \eqref{eq1} takes the form:
\begin{align} \label{eq32}
\frac{dS}{dt}&=(1.1) S\left(1-\frac{S+I}{2.9}\right)-(1.2) S I-(1.2) S +(1.2) V-(0.1) P S -(0.25) S, \nonumber \\
\frac{dI}{dt}&=(1.2) S I + (0.2) V I- (0.125) P I- (0.125) I - (0.35)I, \nonumber\\
\frac{dV}{dt}&=(1.2) S - (1.2) V-(0.2) V I- (0.1) P V -(0.1) V,\\
\frac{dP}{dt}&= (0.075) P S + (0.1) P I +(0.075) P V -(0.25) P. \nonumber
\end{align}
\begin{figure}[h!]
\subfloat{\includegraphics[scale=0.44]{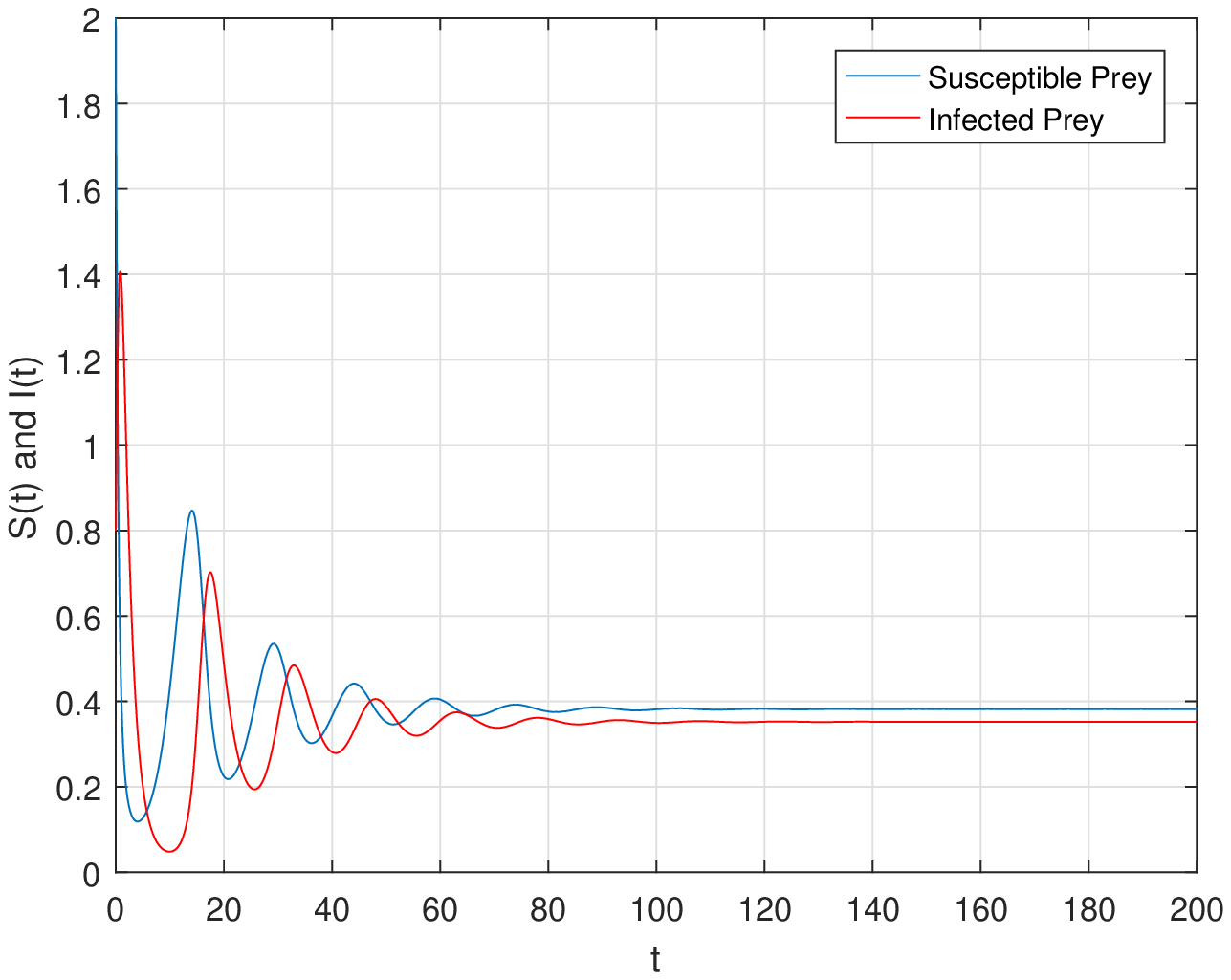}}
\subfloat{\includegraphics[scale=0.44]{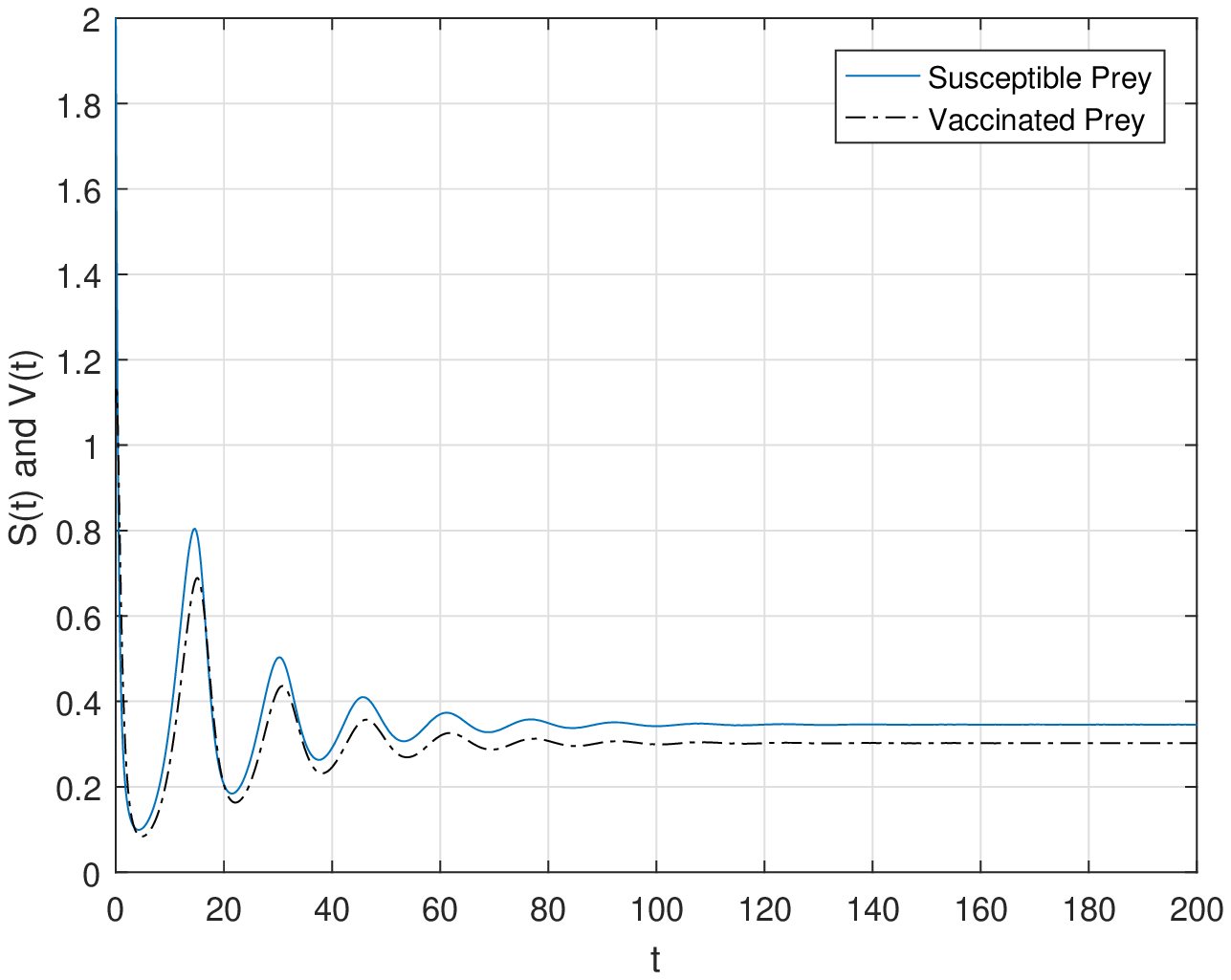}} 
\hspace{0mm}
\subfloat{\includegraphics[scale=0.44]{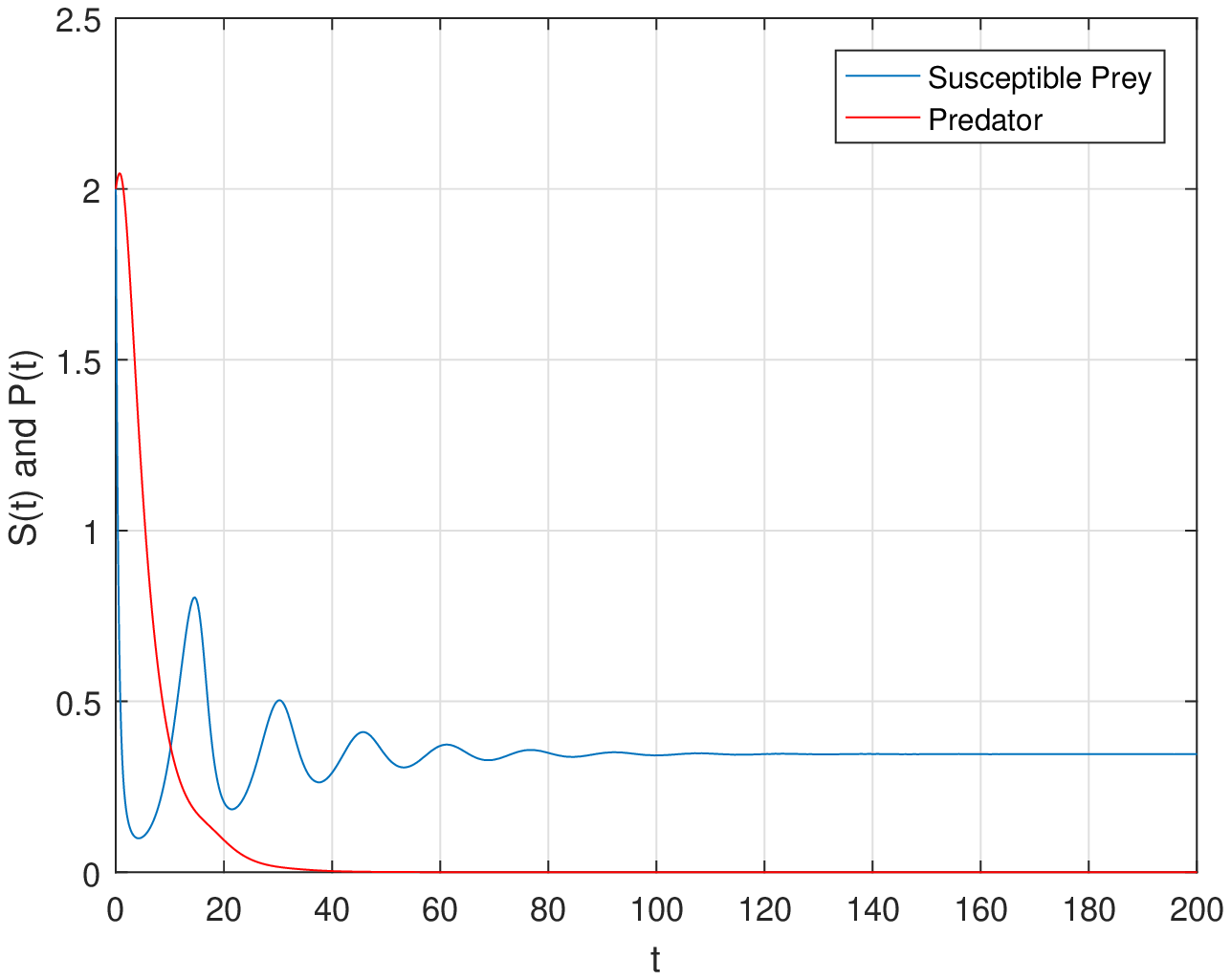}}
\caption{Behavior of solutions for the system \eqref{eq32}.}
\label{fig:5}
\end{figure}
\begin{figure}[t]
\subfloat{\includegraphics[scale=0.35]{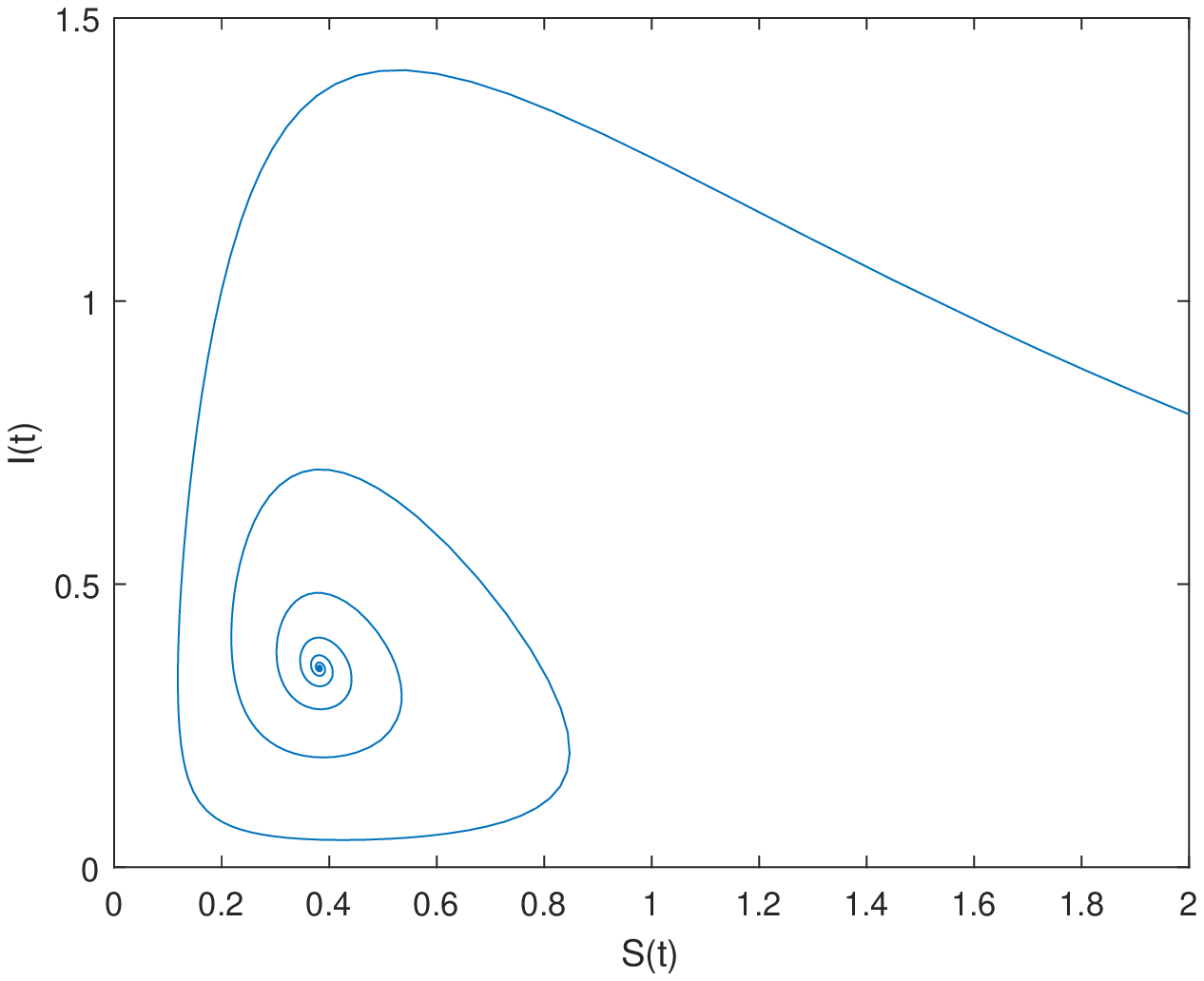}}
\subfloat{\includegraphics[scale=0.35]{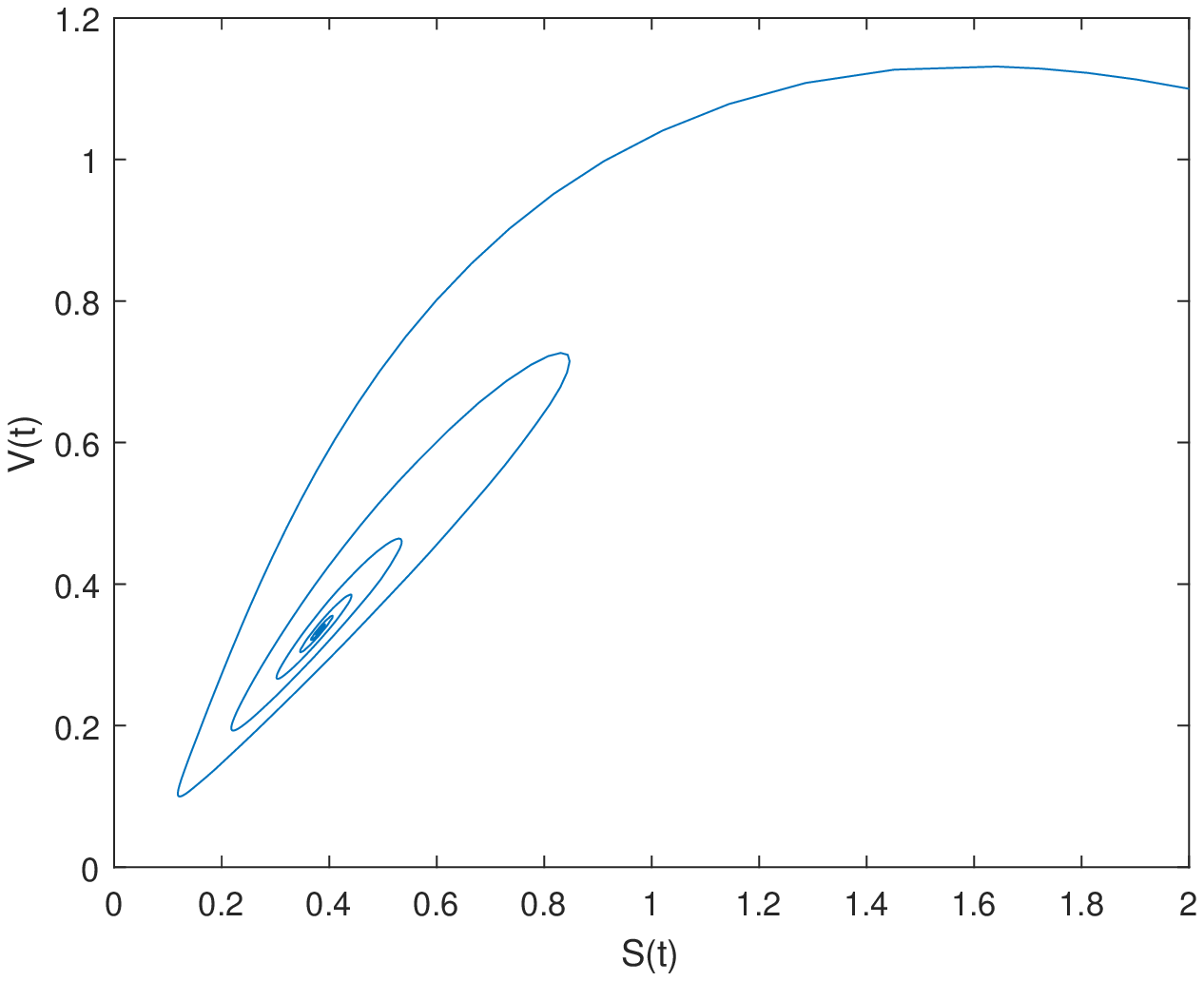}} 
\subfloat{\includegraphics[scale=0.35]{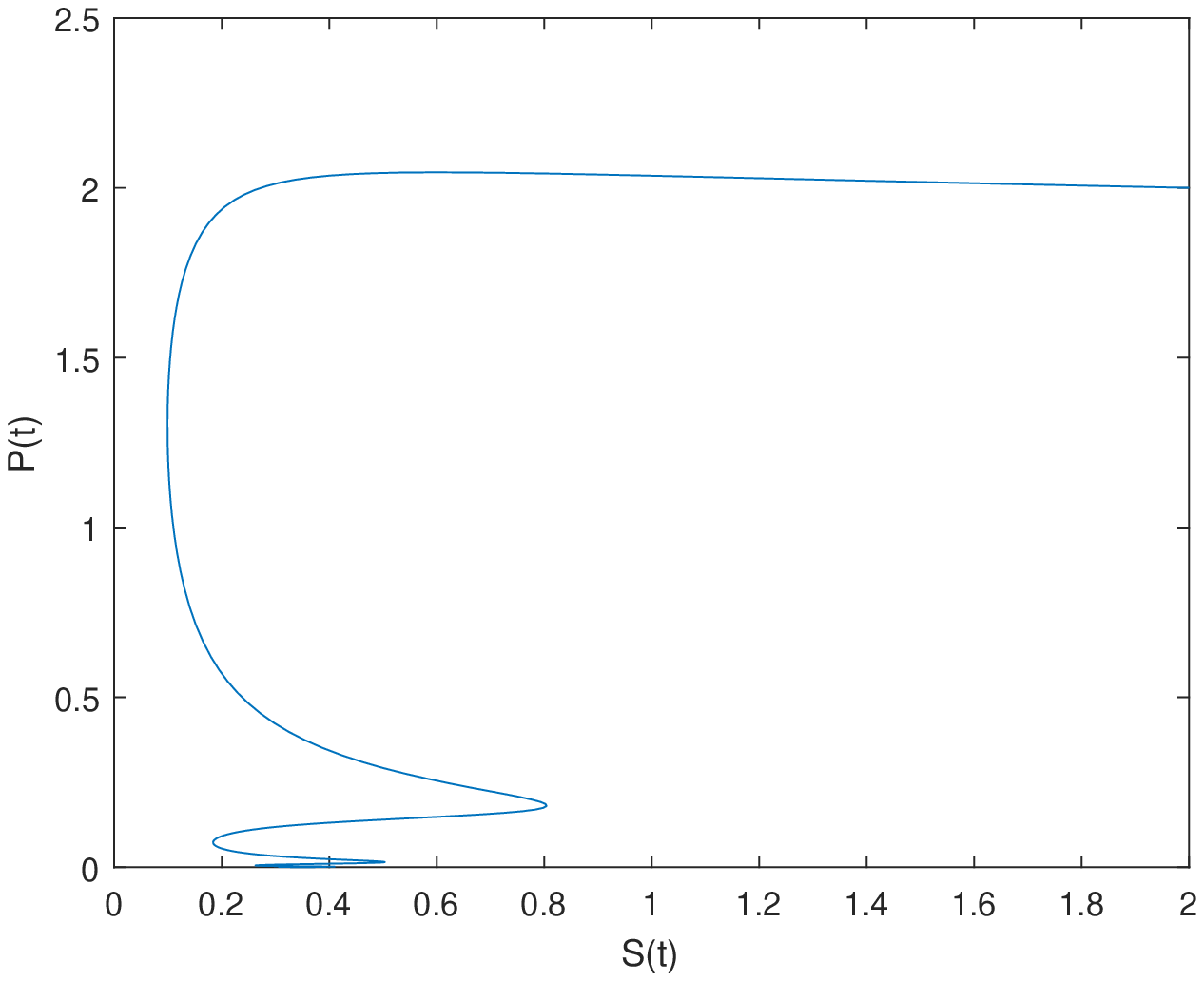}}
\hspace{0mm}
\subfloat{\includegraphics[scale=0.35]{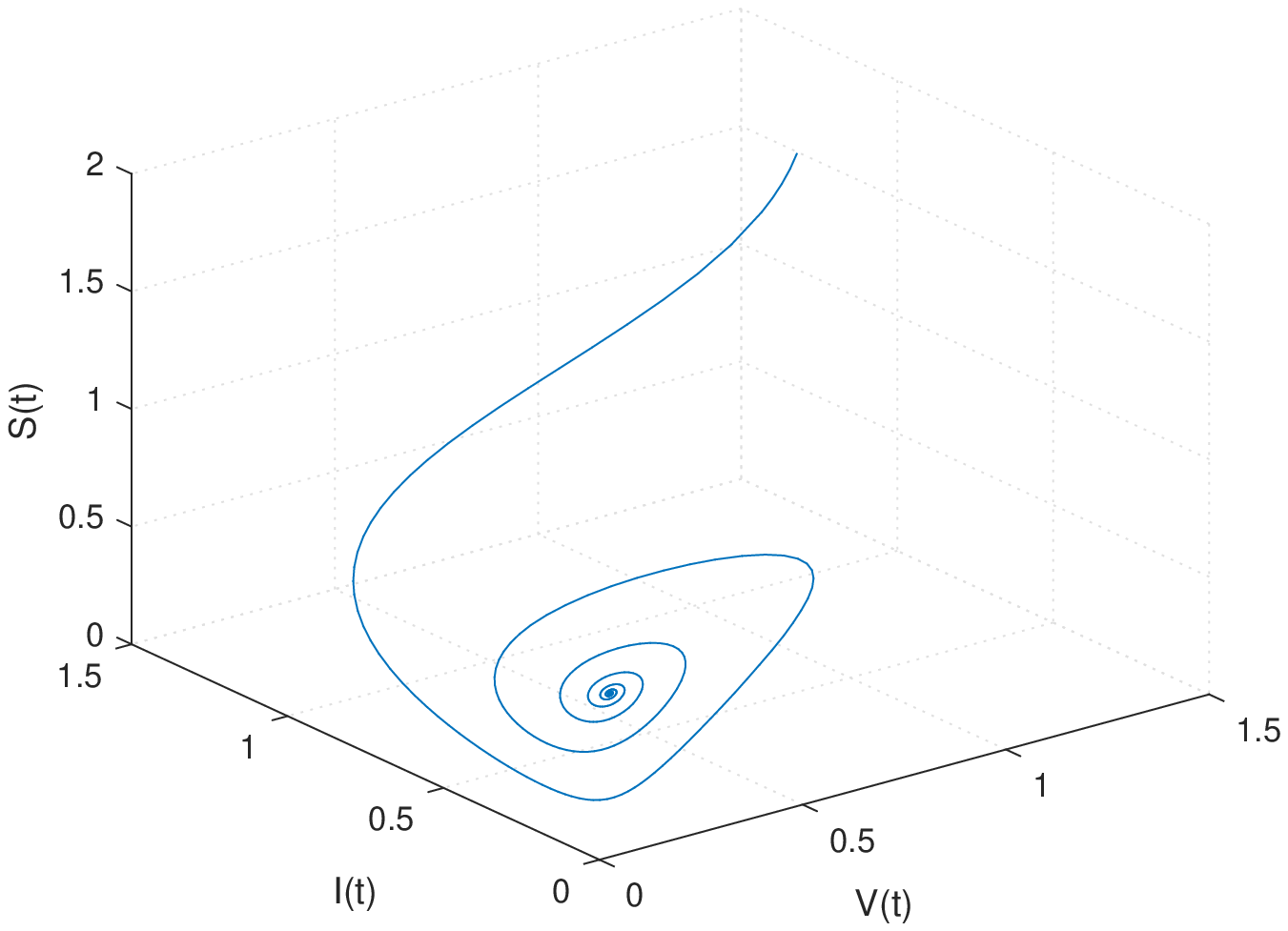}}
\subfloat{\includegraphics[scale=0.35]{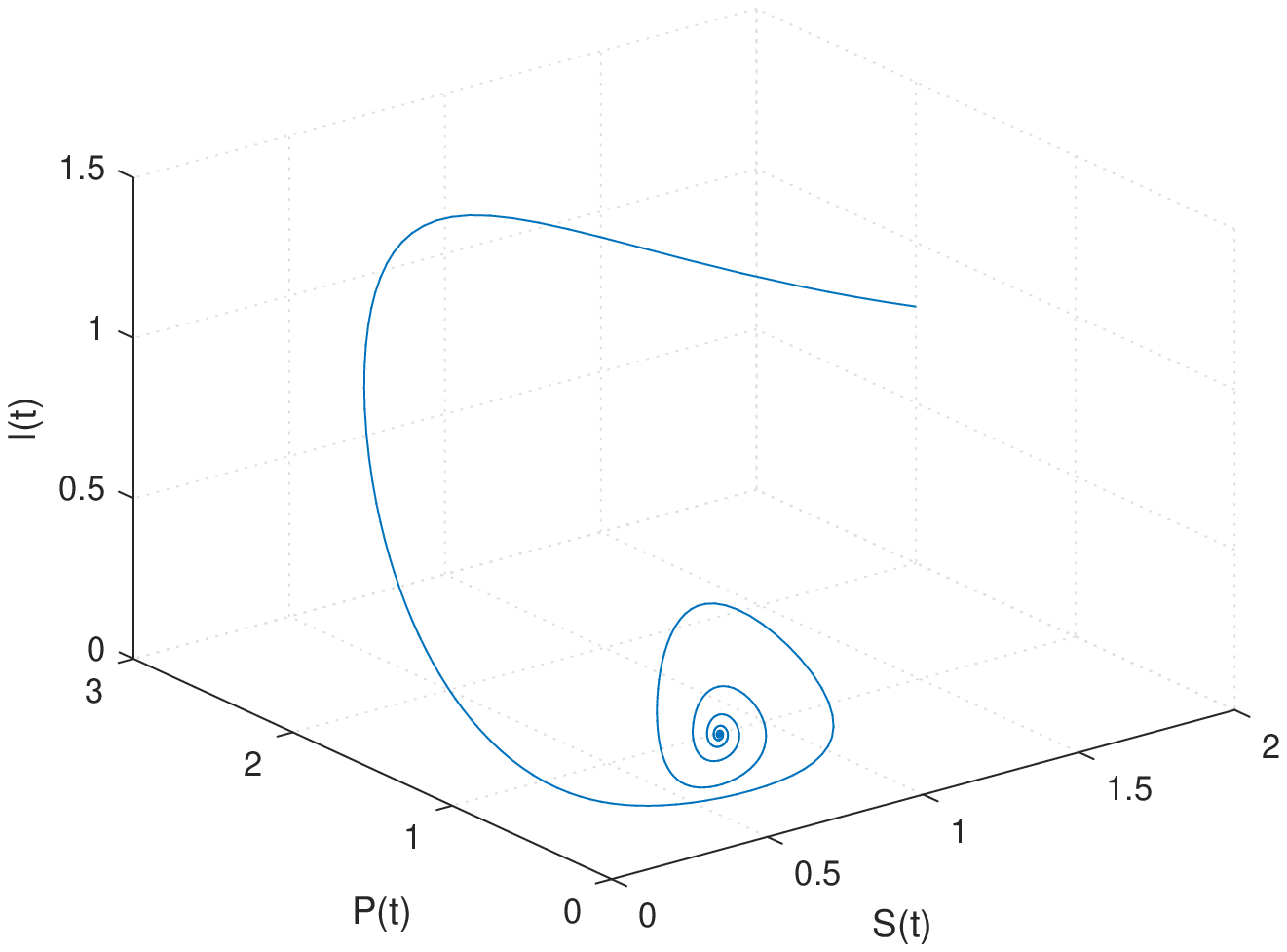}}
\subfloat{\includegraphics[scale=0.35]{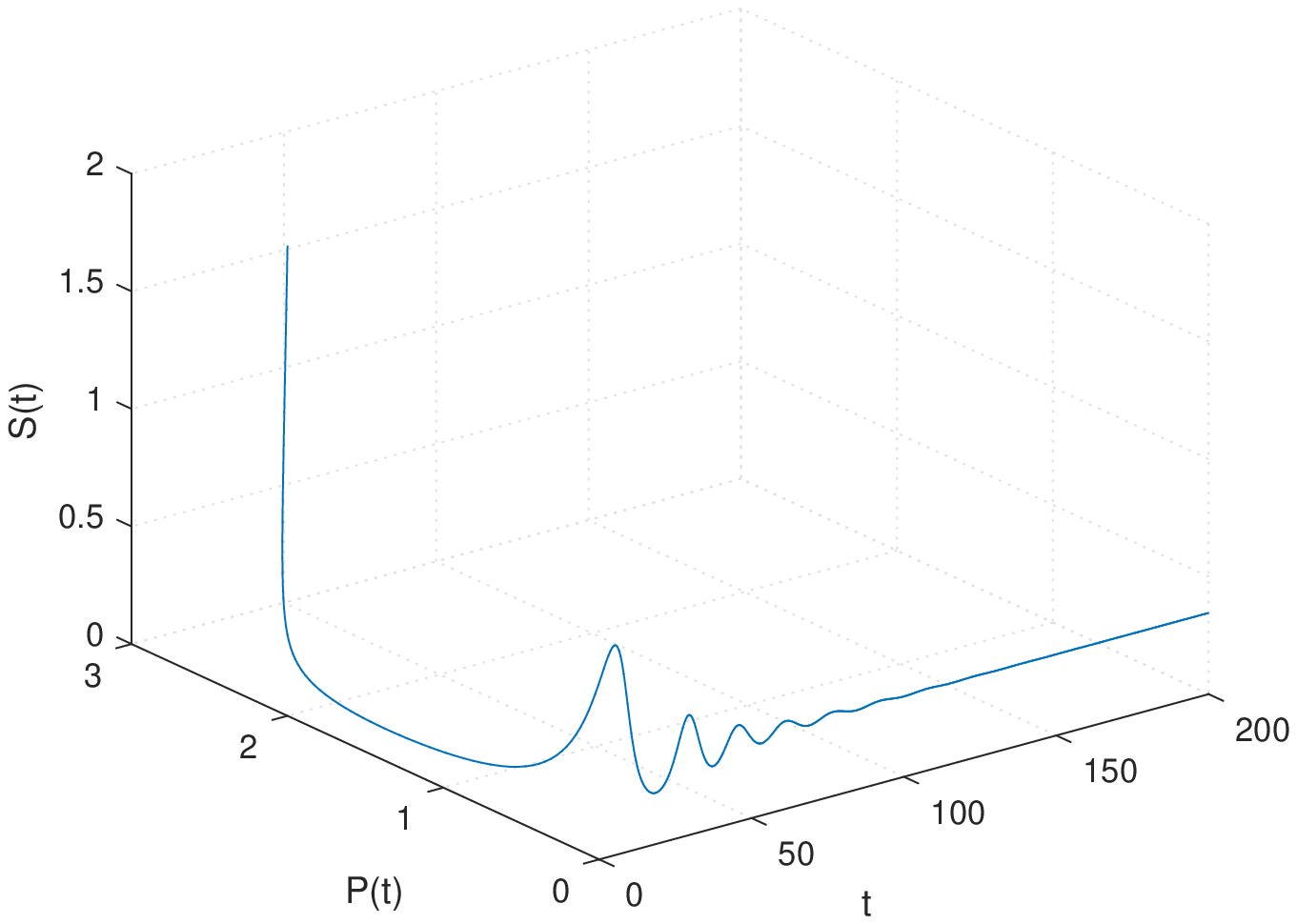}}
\hspace{0mm}
\subfloat{\includegraphics[scale=0.35]{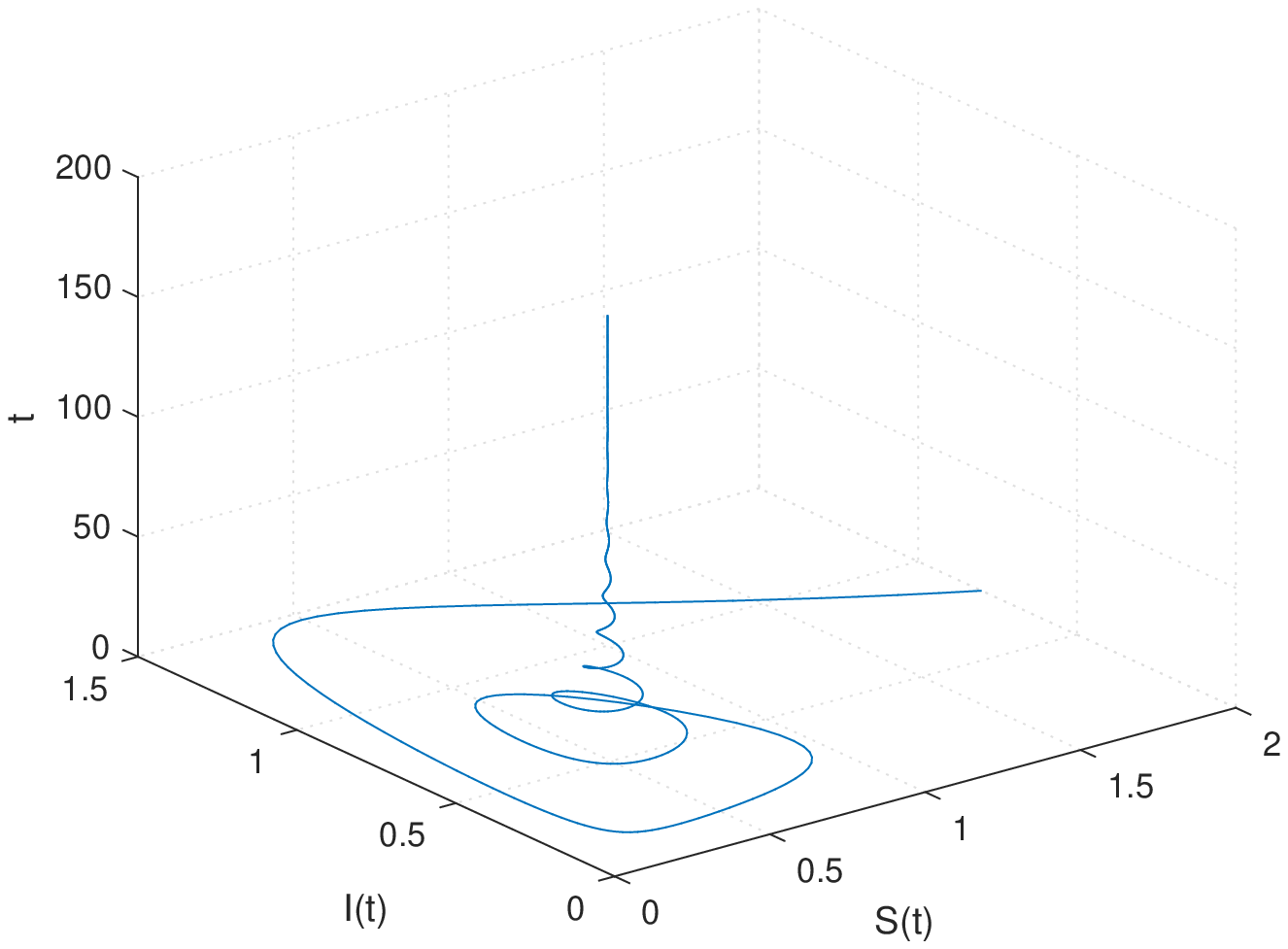}}
\subfloat{\includegraphics[scale=0.35]{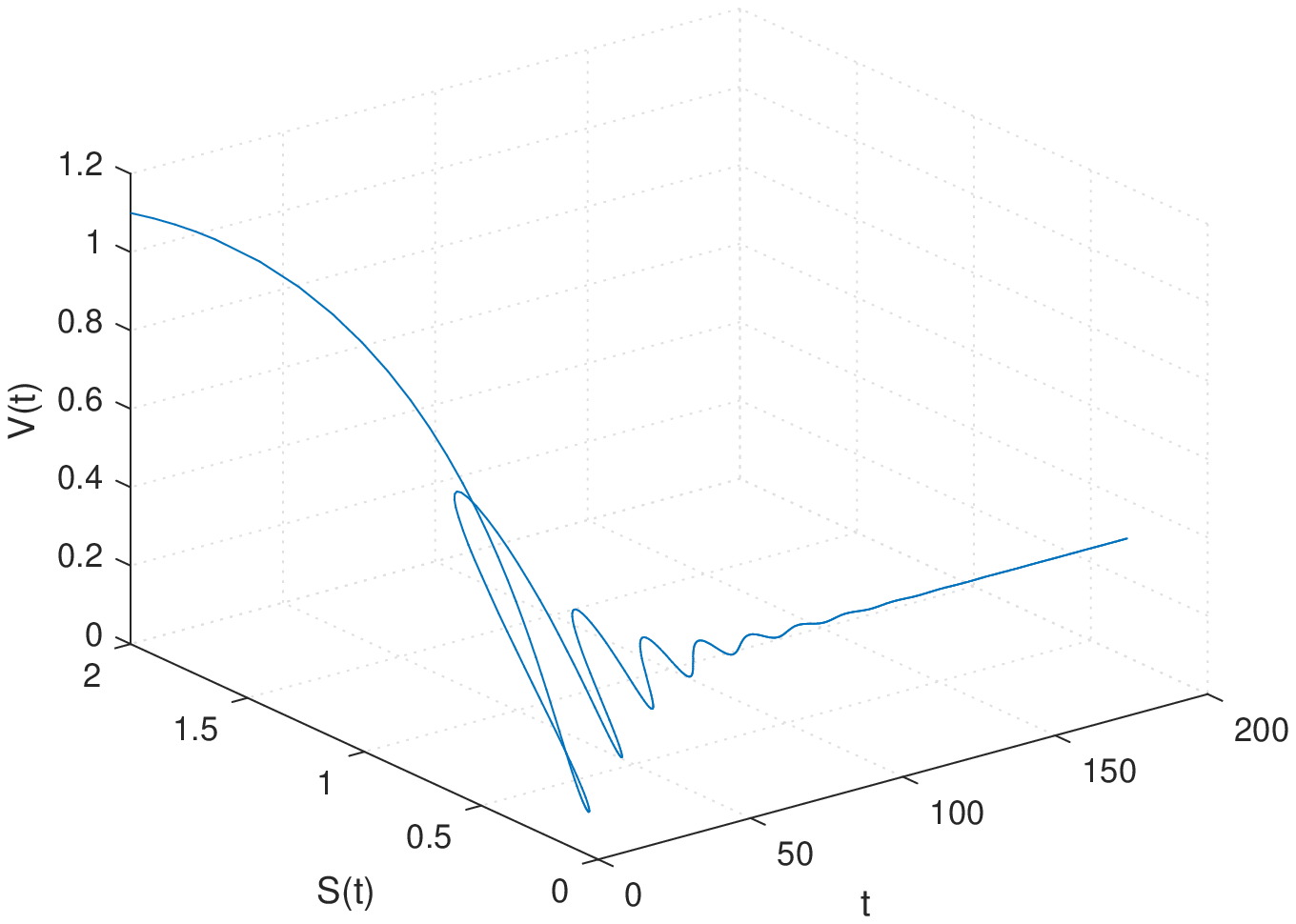}}
\subfloat{\includegraphics[scale=0.35]{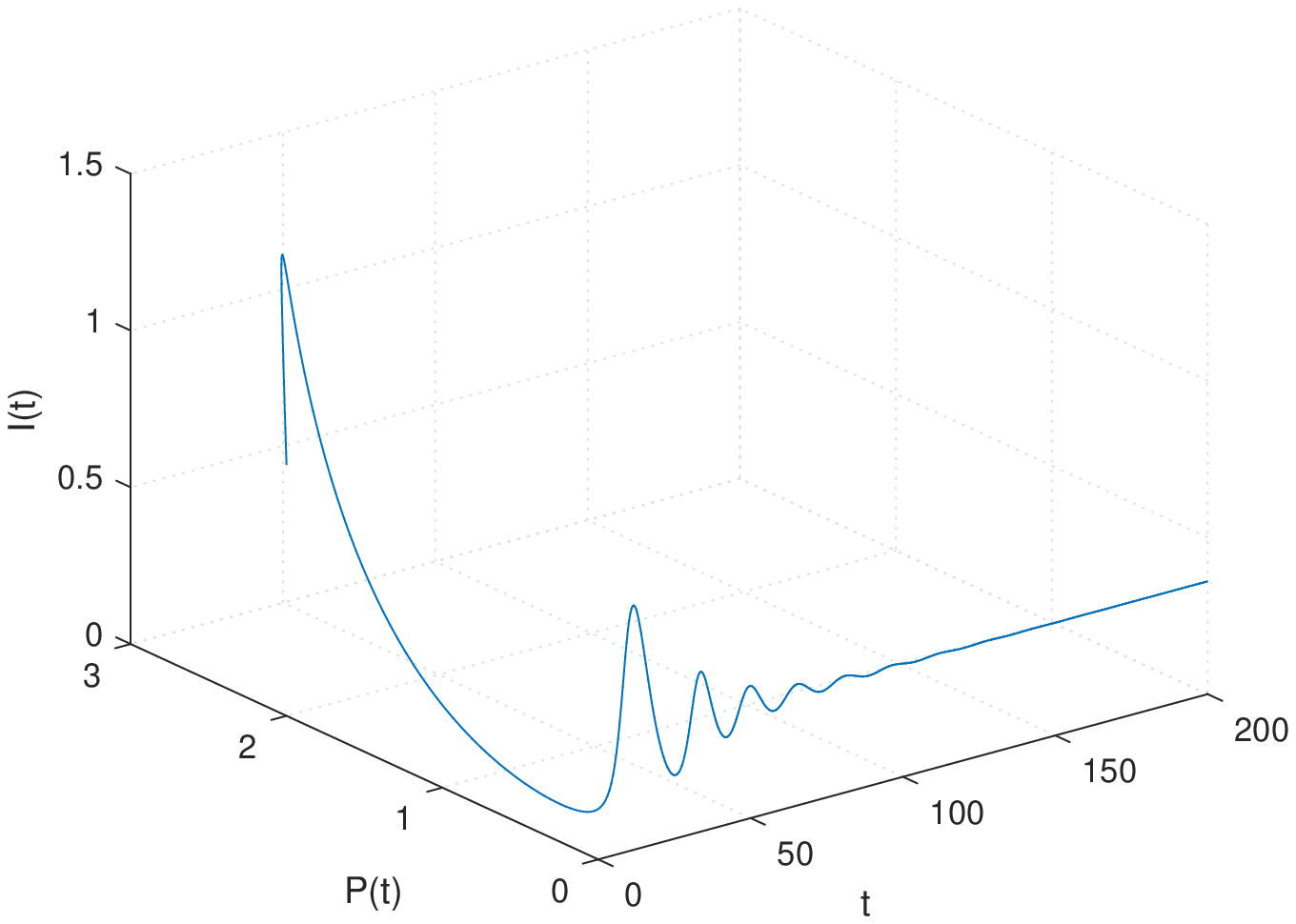}}
\caption{Numerical solutions of the main model without migration.}
\label{fig:6}
\end{figure}

For the system \eqref{eq32}, the equilibrium point $E_0=(0,0,0,0)$ exists but it is not stable as one of the conditions \ref{c3} is not satisfied, that is, $
A=(-r + \theta+ \phi + d_1 + d_3 + m_1 + m_3) = 1.65>0$ but $B=(-r \theta + \theta d_1 - r d_3 + \phi d_3+ d_1 d_3 + \theta m_1 + d_3 m_1 - r m_3 + \phi m_3 +d_1 m_3 + m_1 m_3)=-0.985$, which is not positive. Hence, equilibrium $(E_0)$ is unstable. The equilibrium point $E_1=(1.99755, 0, 1.84389,0)$ exists and one eigenvalue from \eqref{c4} is $\lambda_1 = (-c - d_2- m_2 +\beta S_1+  \sigma V_1) = 2.29084>0$. Therefore, $(E_1)$ is not stable. Equilibrium $E_2=(1.76388, 0, 1.56945, 0.48664)$ exists and one eigenvalue $(-c  - d_2 - m_2 + \beta S_2 +  \sigma V_2 -p_2 P_2) =1.89472 >0$. Thus, equilibrium $(E_2)$ is unstable. The equilibrium point $E_4=(0.345473, 0.359982, 0.302164, 0)$ exists and the equation \eqref{eq8} takes the form:
\begin{equation}\label{en2}
\lambda^3+ 2.5526 \lambda^2 + 0.419829 \lambda +0.406969=0.
\end{equation}
We observe about the conditions \eqref{c5} that:
\begin{enumerate}
\item [(1)]One eigenvalue $(q_1 p_1 S_4 +q_2 p_2 I_4 + q_3 p_3 V_4 -d_4)= -0.165429 < 0$.
\item[(2)] All the coefficients of eq. \eqref{en2} are positive.
\item[(3)] $C_1 C_2 > C_3$ implies that $(2.5526)(0.419829) = 1.07166 > 0.406969$.
\end{enumerate}
Therefore, all the conditions are satisfied. Thus, equilibrium $(E_4)$ is stable.\\
Now we will check the existence of the interior equilibrium point $(E_5)$. After simplification, the equation \eqref{e4} for finding the value of $S_5$ is:
\begin{equation}\label{en3}
S_5^3+ 18.4888 S_5^2+ 54.1917 S_5+ 38.4457=0.
\end{equation}
The roots of eq. \eqref{en3} are $-15.06, -2.33611$ and $-1.09277$. Since none of them is positive, equilibrium $(E_5)$ does not exist.\\
Figures \ref{fig:5} and \ref{fig:6} show the results corresponding to the system \eqref{eq32}.

\noindent
The basic reproduction number is estimated as $\mathcal{R}_0= 5.822817>1$. Therefore, disease is endemic in this case.
\newline
\item[Case (ii)] In the presence of migration with migration rates $m_1=0.25, m_2=0.125$ and $m_3=0.25$. Thus, system \eqref{eq1} will become:
\begin{align} \label{eq33}
\frac{dS}{dt}&=(1.1) S\left(1-\frac{S+I}{2.9}\right)-(1.2) S I-(1.2) S +(1.2) V-(0.1) P S -(0.25)S-(0.25) S, \nonumber \\
\frac{dI}{dt}&=(1.2) S I + (0.2) V I- (0.125) P I- (0.125) I-(0.125) I - (0.35)I, \nonumber\\
\frac{dV}{dt}&=(1.2) S - (1.2) V-(0.2) V I- (0.1) P V-(0.25) V -(0.1) V,\\
\frac{dP}{dt}&= (0.075) P S + (0.1) P I +(0.075) P V -(0.25) P. \nonumber
\end{align}
\begin{figure}[h!]
\includegraphics[scale=0.56]{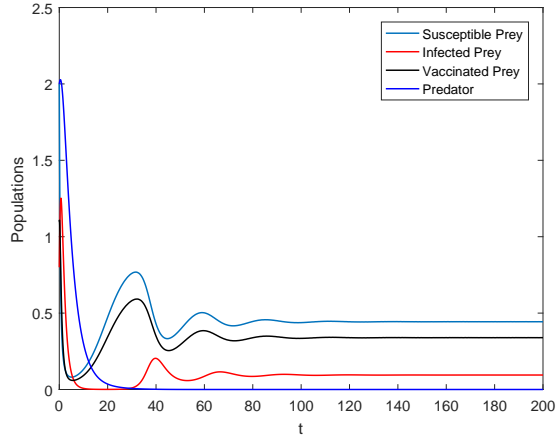}
\caption{Behavior of solutions for the system \eqref{eq33}.}
\label{fig:7}
\end{figure}
\begin{figure}[h!]
\subfloat{\includegraphics[scale=0.35]{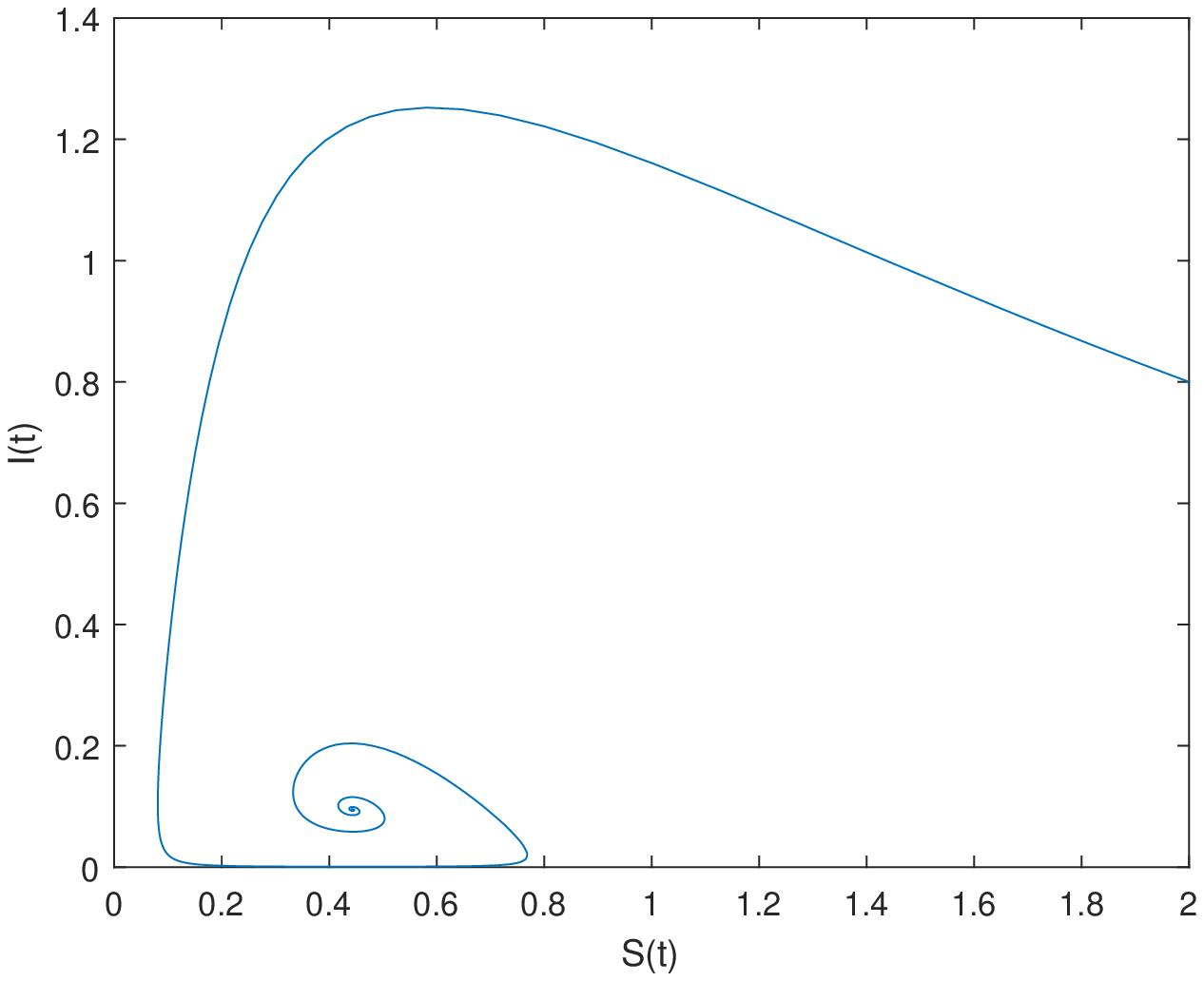}}
\subfloat{\includegraphics[scale=0.35]{SV2}} 
\subfloat{\includegraphics[scale=0.35]{SP2}}
\hspace{0mm}
\subfloat{\includegraphics[scale=0.35]{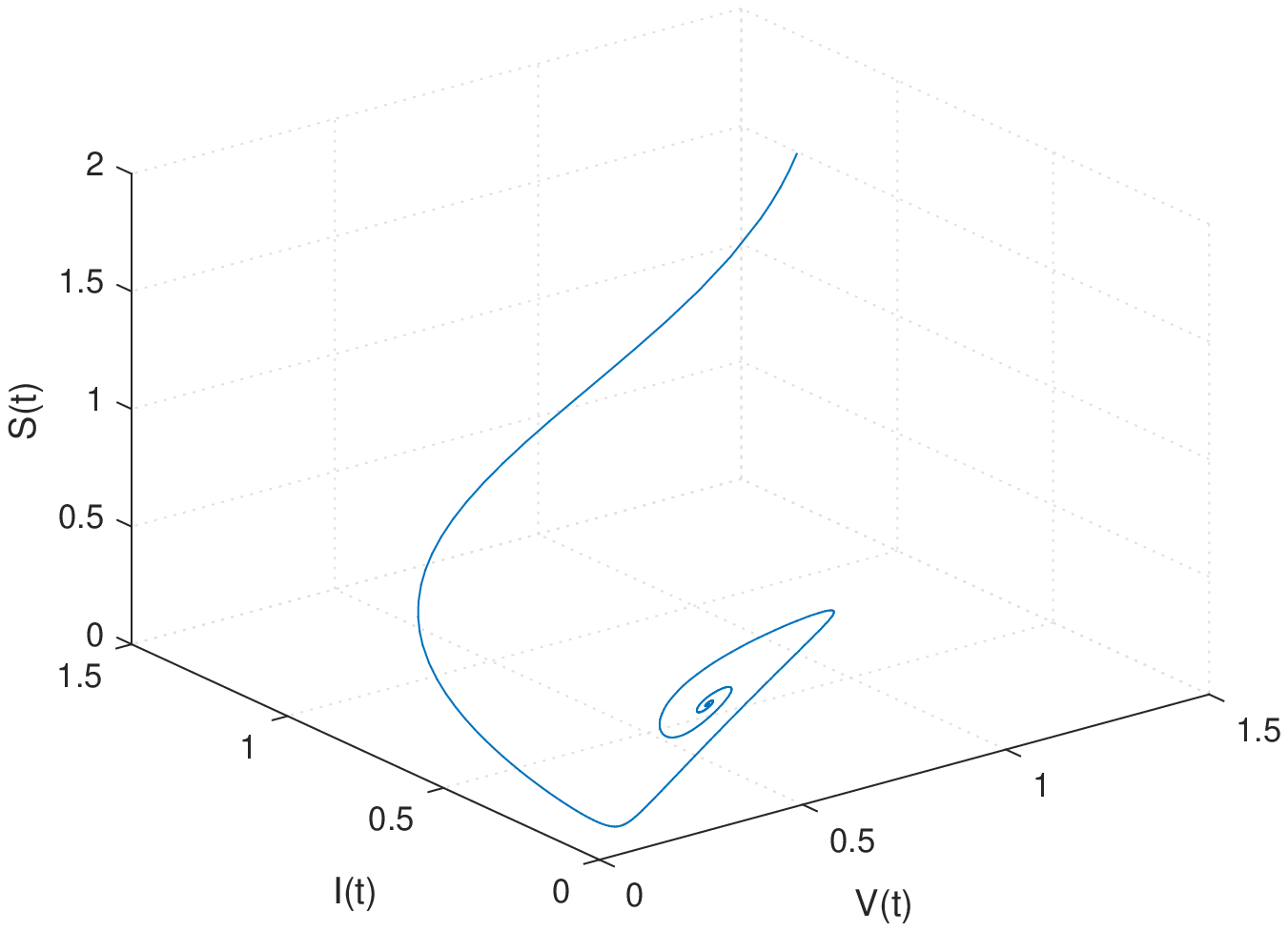}}
\subfloat{\includegraphics[scale=0.35]{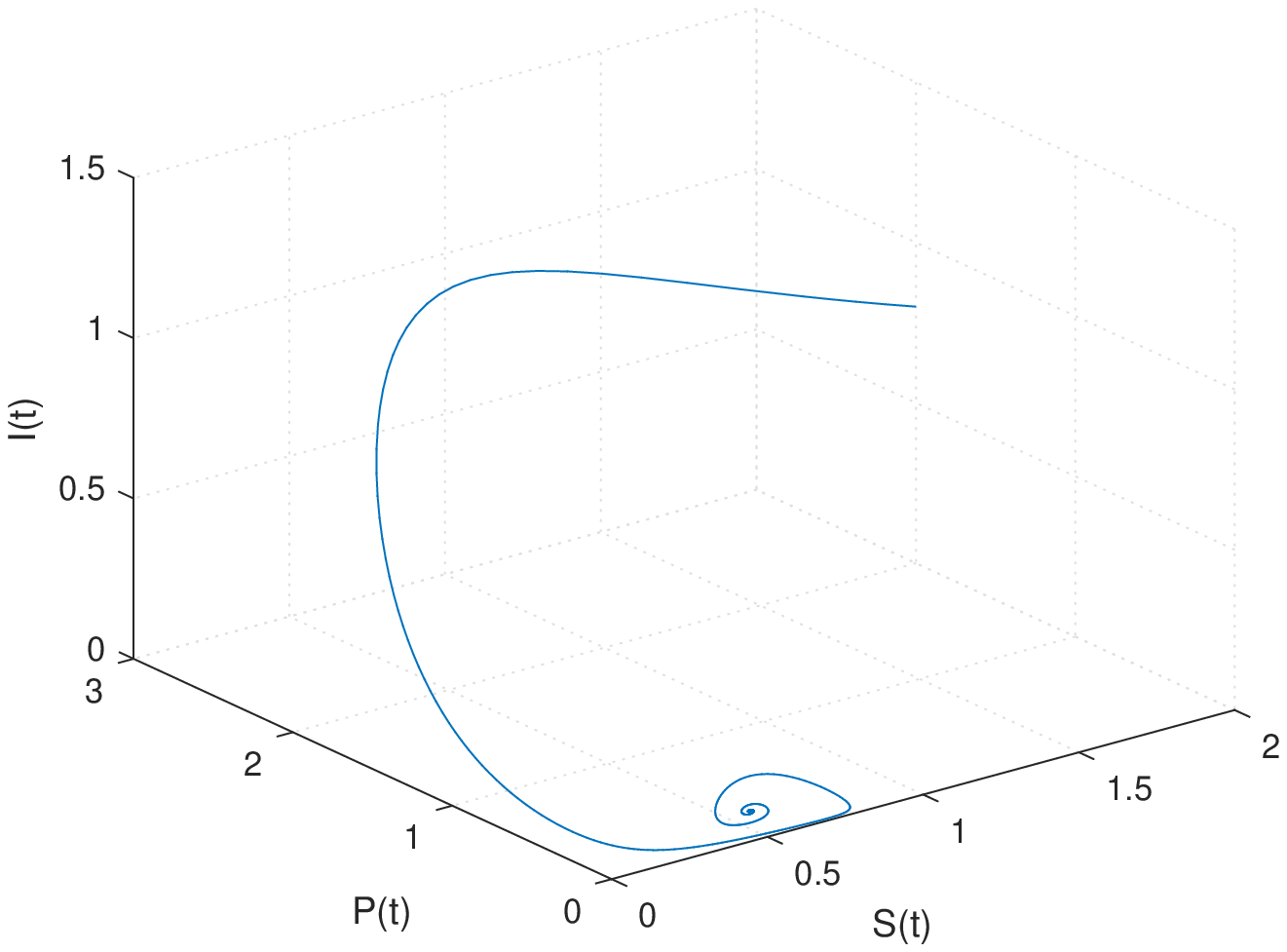}}
\subfloat{\includegraphics[scale=0.35]{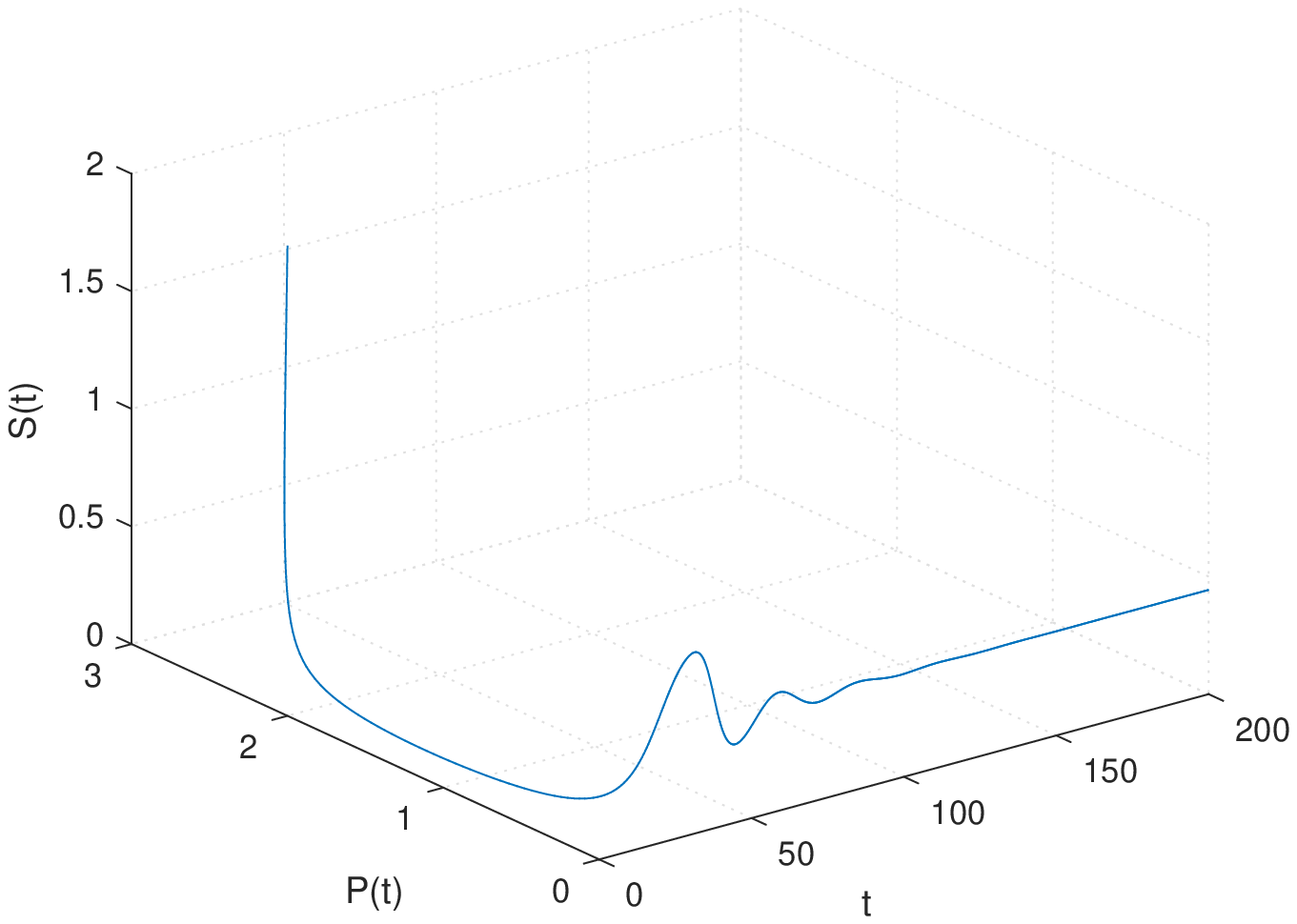}}
\hspace{0mm}
\subfloat{\includegraphics[scale=0.35]{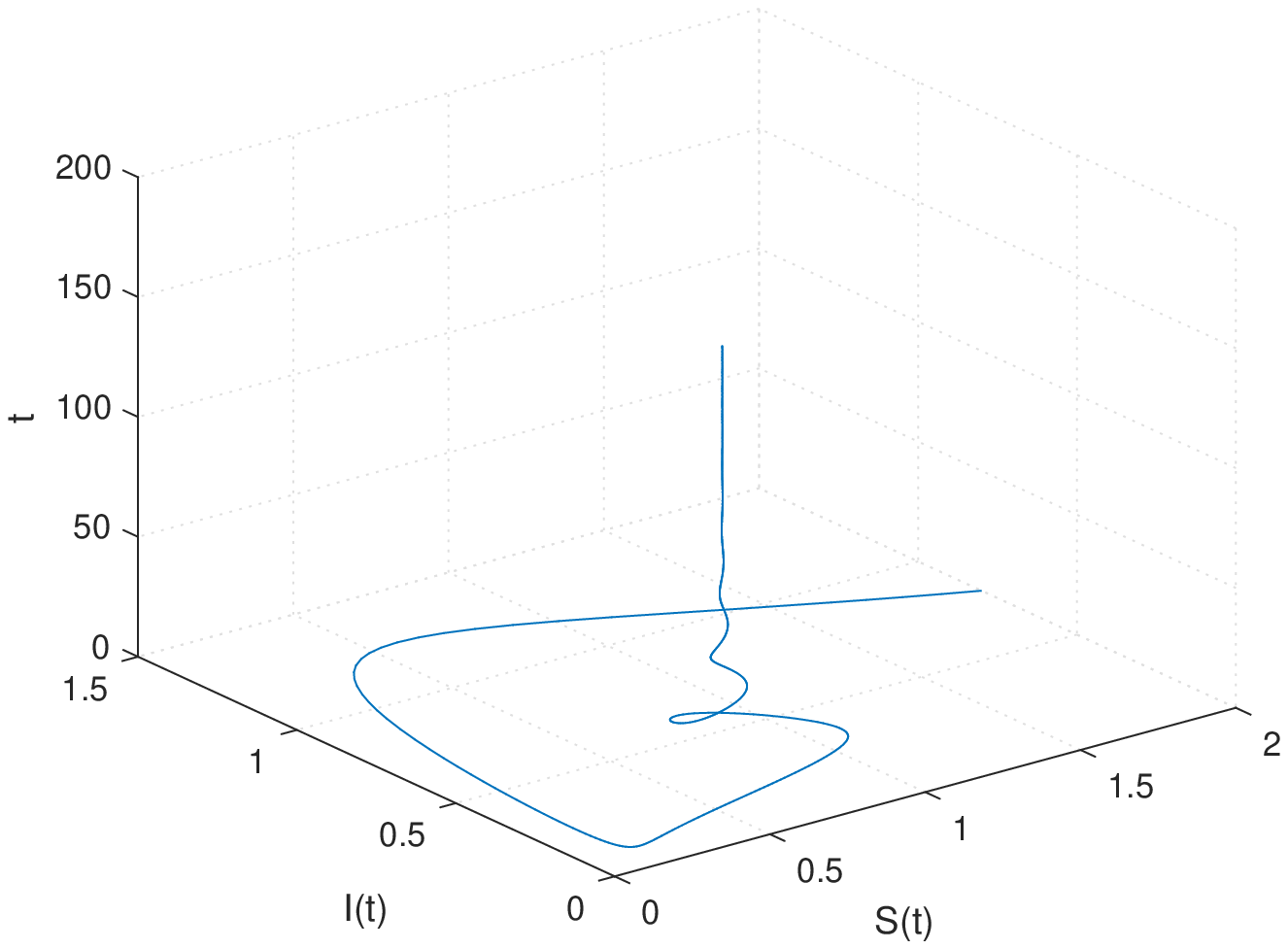}}
\subfloat{\includegraphics[scale=0.35]{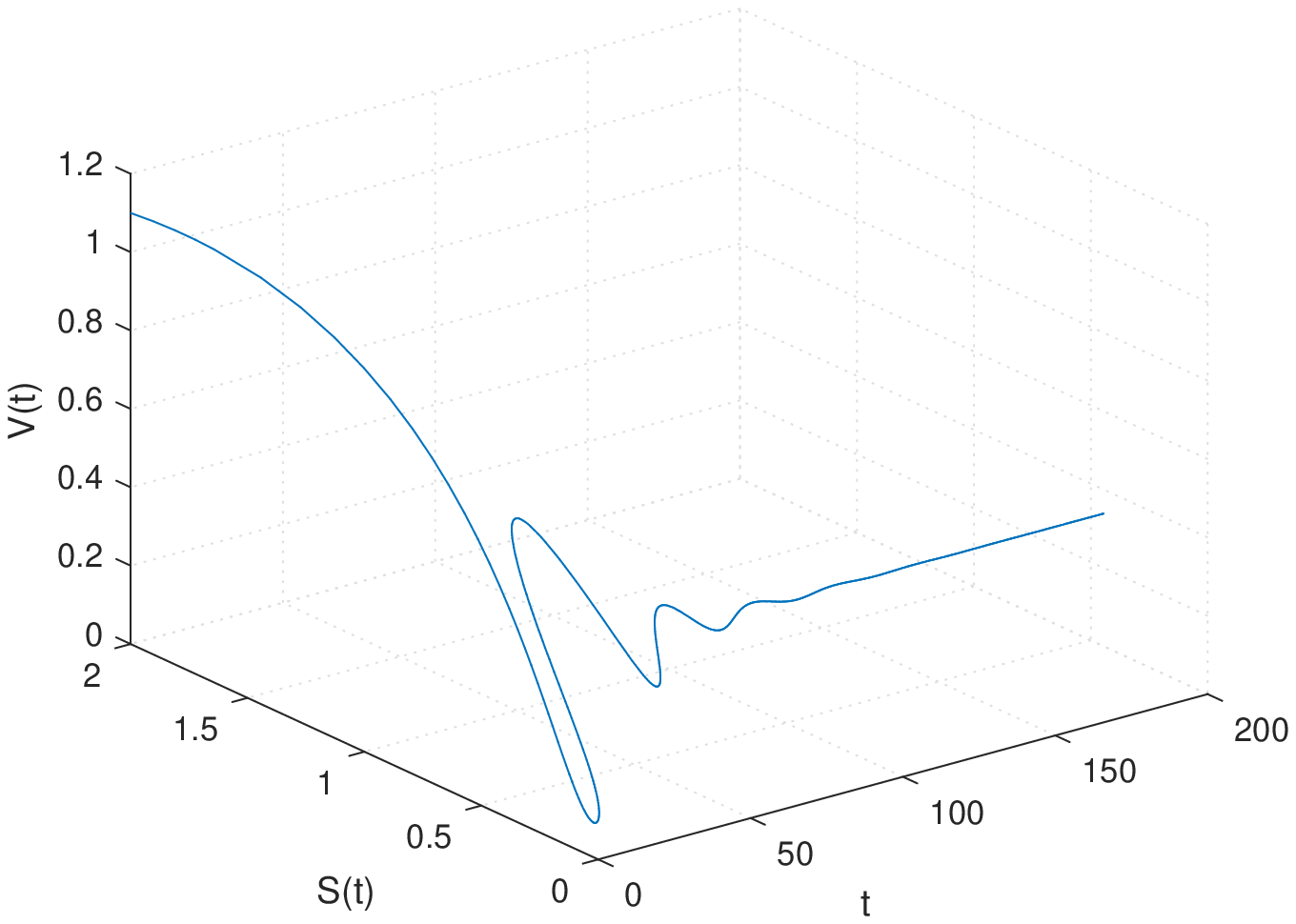}}
\subfloat{\includegraphics[scale=0.35]{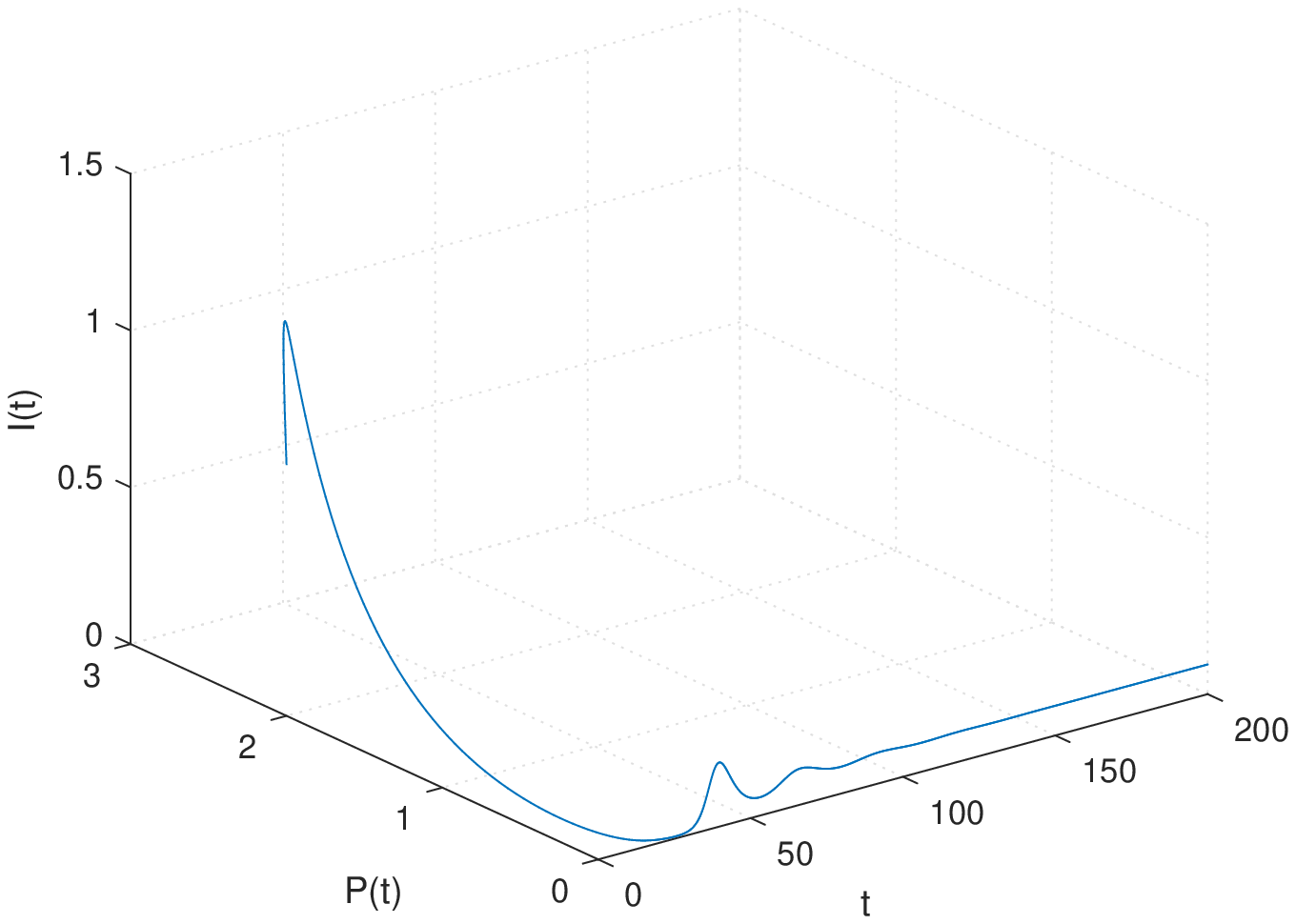}}
\caption{Numerical solutions for the main model with migration.}
\label{fig:8}
\end{figure}

For the system \eqref{eq33}, the equilibrium point $E_0=(0,0,0,0)$ exists but it is not stable as one of the conditions \eqref{c3} is not satisfied, that is, $
A=(-r + \theta+ \phi + d_1 + d_3 + m_1 + m_3) = 2.15>0$ but $B=(-r \theta + \theta d_1 - r d_3 + \phi d_3+ d_1 d_3 + \theta m_1 + d_3 m_1 - r m_3 + \phi m_3 +d_1 m_3 + m_1 m_3)=-0.51<0.$ Hence, equilibrium $(E_0)$ is unstable. The equilibrium point $E_1=(0.867449, 0, 0.671573,0)$ exists but one eigenvalue from \eqref{c4} is $\lambda_1 = (-c - d_2- m_2 +\beta S_1+  \sigma V_1) = 0.575253>0$. Therefore, $(E_1)$ is not stable.\\
\noindent
On simplification, the equation \eqref{eqn2} for finding the value of $P_2$ is:
\begin{equation}\label{en4}
P_2^3+ 61.6437 P_2^2+ 932.204 P_2+ 1635.14=0.
\end{equation}
The roots of eq. \eqref{en4} are $-2.01336, -21.0518$ and $-38.5785$. Equilibrium $(E_2)$ does not exist as none of the roots is positive. \\

\noindent
The equilibrium point $E_4=(0.443469, 0.0947259, 0.339185, 0)$ exists and the characteristic equation \eqref{eq8} is
\begin{equation}\label{en5}
\lambda^3+ 2.65497 \lambda^2 + 0.344814 \lambda +0.151479=0.
\end{equation}
We observe about the conditions \eqref{c5} that:
\begin{enumerate}
\item [(1)]One eigenvalue $(q_1 p_1 S_4 +q_2 p_2 I_4 + q_3 p_3 V_4 -d_4)= -0.181828 < 0$.
\item[(2)] All the coefficients of eq. \eqref{en5} are positive.
\item[(3)] $C_1 C_2 > C_3$ implies that $(2.65497)(0.344814) = 0.915471 > 0.151479 $.
\end{enumerate}
Therefore, all the conditions are satisfied. Thus, equilibrium $(E_4)$ is stable.\\
Now we will check the existence of the interior equilibrium point $(E_5)$. After simplification, the equation \eqref{e4} for finding the value of $S_5$ is:
\begin{equation}\label{en6}
S_5^3+ 19.6443 S_5^2+ 62.1861 S_5+ 46.1203=0.
\end{equation}
The roots of eq. \eqref{en6} are $-2.6173, -1.10686$ and $-15.9202$. Thus, equilibrium $(E_5)$ does not exist as none of the roots is positive.\\
Figures \ref{fig:7} and \ref{fig:8} show the results corresponding to the system \eqref{eq33}.
The basic reproduction number $\mathcal{R}_0$ in this case is calculated as $\mathcal{R}_0= 1.95876 >1$. Therefore, disease is endemic.
\end{enumerate}
\section{\textbf{Discussion}} \label{discussion}
The mathematical model which we consider, expressed by four non-linear ordinary differential equations described in \eqref{eq1}.  We have considered this model to study the influence of disease, migration and vaccination on an environment where two or more interacting species are present. The boundedness of the solutions of the system, existence and stability conditions of equilibria are discussed. The model is analyzed with and without infection in prey population. On comparing the disease free model \eqref{eq3} and the main model \eqref{eq1}, we have seen that trivial equilibrium points $(E^{(0)})$ and $(E_0)$ always exist and stable only if conditions \eqref{c1} are satisfied. The dynamic behavior for both the models around trivial equilibrium point is same. Mathematically, we have seen that these equilibria are unstable. If these would be stable, then it tells about the extinction of species in the ecosystem. Now, the equilibrium points $E^{(1)}(S_1,V_1,0)$ and $E_1(S_1,0,V_1,0)$ conveyed the same message ecologically as both represent disease and predator free conditions but at the same time, mathematically they are different as per concern to the dimensional study. Similarly, $E^{(2)}(S_2, V_2, P_2)$ and $E_2(S_2,0,V_2,P_2)$ are disease-free equilibrium points. The existence conditions for the equilibrium points $(E^{(1)})$, $(E_1)$ and $(E^{(2)})$,  $(E_2)$ corresponding to both models are same but the stability conditions of these equilibria are different. If $(E^{(1)})$ and $(E_1)$ are stable, then it simply means prey population will survive for the long period of time as no predation and infection will occur in environment.

Further, we have seen that equilibrium $E_3(0,I_3,0,P_3)$ does not exist as biologically and ecologically, population cannot be assumed negative. The equilibrium point $E_4(S_4,I_4,V_4,0)$ is predator-free equilibrium. The removal of predator has several impacts on prey population like behavioral changes in prey species, etc. The prey population will stay in one place as there are no predators and due to presence of infection, disease can spread more within prey species and that would affect the survival of prey. Mathematically, we observe the increment in the healthy and vaccinated prey as well as the reduction in number of infected prey population. This happens due to the presence of migration and vaccination in prey. Now, the non zero equilibrium $E_5(S_5,I_5,V_5,P_5)$ is the most important equilibrium point as it represents the coexistence of all the species in the ecosystem. This is very essential for the ecological balance.

It has also been observed that equilibrium points are globally asymptotically stable in different 2-D planes. For example, Figure \ref{fig:8} shows that equilibrium $E_4$ is globally asymptotically stable in $S-I$, $S-V$ and $S-P$ planes for the system.

As mentioned in remark \ref{rem1} if $\phi = 0$, then no vaccination takes place. Also, the number of vaccinated prey will increase with the increment in the value of $\phi$. Figure \ref{fig:9} shows the solutions of vaccinated prey for the different values of $\phi$. 
\begin{figure}[h!]
\includegraphics[scale=0.5]{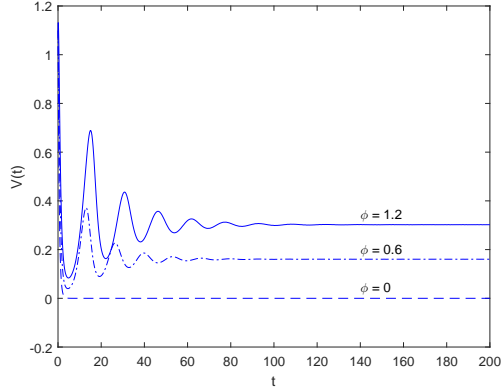}
\caption{Solutions of vaccinated prey population for different values of $\phi$.}
\label{fig:9}
\end{figure}
\par
If the condition $\mathcal{R}_0<1$ is satisfied, then the infection will die out. Generally, it is very difficult to control the epidemic for the larger estimation of $\mathcal{R}_0$. The relation between susceptible prey and infected prey for the different conditions on $\mathcal{R}_0$ is shown in Figure \ref{fig:10}. The change in number of susceptible prey has been observed while increasing the value of $\mathcal{R}_0$.
\begin{figure}[h!]
\subfloat[$\mathcal{R}_0<1$]{\includegraphics[scale=0.36]{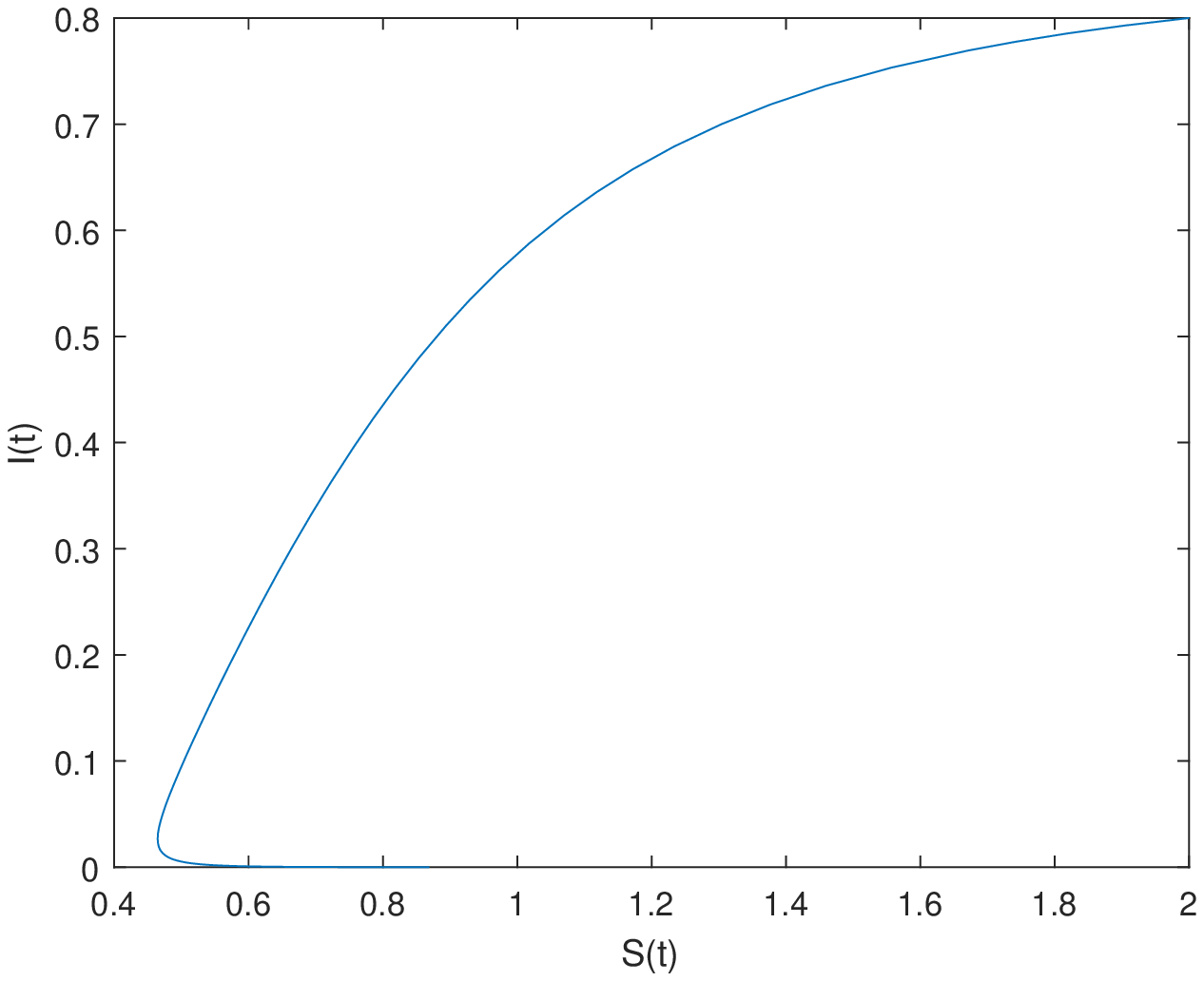}}
\subfloat[$\mathcal{R}_0=1$]{\includegraphics[scale=0.36]{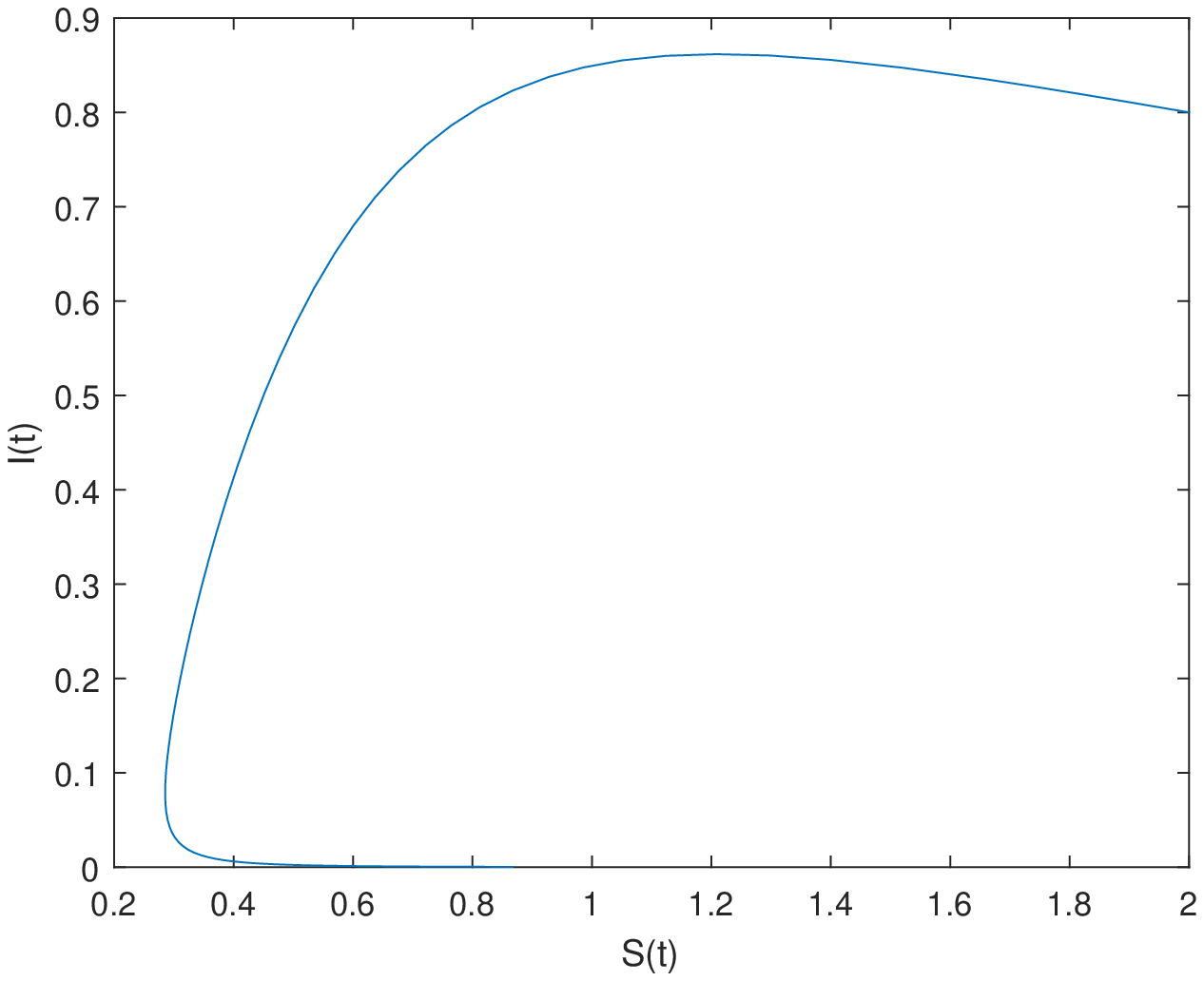}} 
\subfloat[$\mathcal{R}_0>1$]{\includegraphics[scale=0.36]{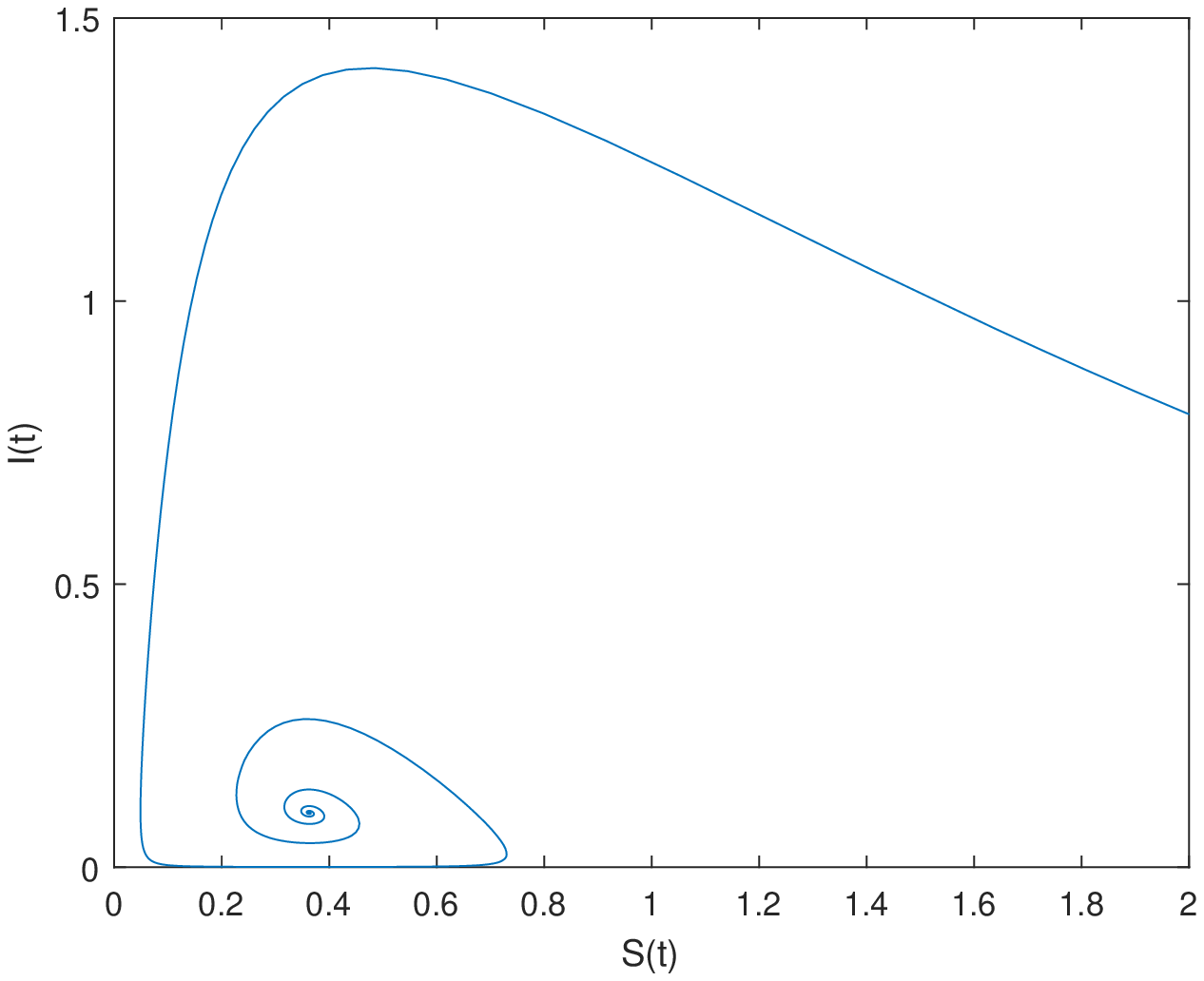}}
\caption{Numerical solutions for different values of $\mathcal{R}_0$.}
\label{fig:10}
\end{figure}
\par 
We stated in remark \ref{rem2} that migration is not same as mortality. Migration plays a different role to explore more about the system \eqref{eq1}. As we have observed the changes in solutions for main model in the absence and presence of migration. Figures \ref{fig:6} and \ref{fig:8} explain that the number of infected prey population decreases when we consider migration in our model. This also implies the increment in the population of healthy prey, it simply means that the infection is reducing within the population. Similarly, it has been observed that the number of vaccinated prey increases with the effect of migration in main model. The dependency of equilibria on migration is described in Table 2.
\begin{table}[h!]
\caption{Dependency of equilibria on migration.}
\begin{tabular}{c@{\hskip 0.6in}l@{\hskip 0.6in} l}
\hline
Equilibria & Existence & Stability\\
\hline
$(E_0)$ & Independent & $m_1$ and $m_3$\\
$(E_1)$ & $m_1$ and $m_3$  & $m_1$, $m_2$ and $m_3$\\
$(E_2)$ & $m_1$ and $m_3$ & $m_1$, $m_2$ and $m_3$\\
$(E_4)$ & $m_1$, $m_2$ and $m_3$ & $m_1$, $m_2$ and $m_3$\\
$(E_5)$ & $m_1$, $m_2$ and $m_3$ & $m_1$, $m_2$ and $m_3$\\
\hline
\end{tabular}
\end{table}

We have seen that the conditions of existence and stability of equilibrium points depend on the parameters. Thus, the estimation of parameters is an important phase for numerical simulations of a mathematical model.
\section*{\textbf{Acknowledgement}}
This research is financially supported by University Grant Commission (UGC), Government of India to the author, Harsha Kharbanda (Sr. No. 2121440663). She gratefully acknowledges the support for the research work.
\appendix
\section{}
\subsection{(Coefficients of Eq.\eqref{eq7})}\label{Appendix:a1} 
The coefficients of equation \eqref{eq7} are given by:
\begin{align*}
B_1 =& -tr(A)= -(\alpha_{11} + \alpha_{22}+\alpha_{33})\\
=& -\left(r-\frac{2 r S_3}{k} -\phi -p_1 P_3 - m_1-d_1-\theta - p_3 P_3 -m_3-d_3 +q_1 p_1 S_3+ q_3 p_3 V_3-d_4\right).\\
B_2 =& (\alpha_{11}\alpha_{22} - \alpha_{12}\alpha_{21})+(\alpha_{11}\alpha_{33}-\alpha_{13}\alpha_{31})+(\alpha_{22}\alpha_{33}-\alpha_{23}\alpha_{32})\\
=& [(r-\frac{2 r S_3}{k} -\phi -p_1 P_3 - m_1-d_1)(-\theta - p_3 P_3 -m_3-d_3)-\theta \phi]+ \\  
& [(r-\frac{2 r S_3}{k} -\phi -p_1 P_3 - m_1-d_1)(q_1 p_1 S_3+ q_3 p_3 V_3-d_4)-(-p_1 S_3)(q_1 p_1 P_3)]+\\
& [( -\theta - p_3 P_3 -m_3-d_3)(q_1 p_1 S_3+ q_3 p_3 V_3-d_4)-( -p_3 V_3)(q_3 p_3 P_3)]\\
= & \theta m_1 +d_3 m_1 + \phi m_3 + m_1 m_3 - \frac{(-2 r S_3 + k (r - \phi)) d_3 + r (k - 2 S_3) (\theta + d_4 + m_3)}{k}+ P_3 \theta p_1 \\
&+ P_3 d_3 p_1 +  P_3 m_3 p_1 - P_3 r p_3 + P_3 \phi p_3 +P_3 m_1 p_3 +P_3^2 p_1 p_3+ d_4 (\theta+ \phi + d_3+ m_1+ m_3 \\
&+ P_3 (p_1 + p_3))+r S_3 p_1 q_1 - S_3 \theta p_1 q_1 + S_3 \phi p_1 q_1- S_3 d_3 p_1 q_1 - S_3 m_1 p_1 q_1-S_3 m_3 p_1 q_1 \\
& -P_3 S_3 p_1 p_3 q_1 +d_1 (\theta + d_3 + d_4 + m_3 -S_3 p_1 q_1 +p_3(P_3 - V_3 q_3))\\
&+\frac{2 P_3 r S_3 p_3- 2 r S_3^2 p_1 q_1-V _3(2 r S_3 + k (-r + \theta + \phi) + k (d_3 + m_1 + m_3 +P_3 p_1)) p_3 q_3}{k}.\\
B_3 =& -det(A) = -[\alpha_{11}(\alpha_{22} \alpha_{33} -\alpha_{32} \alpha_{23})+\alpha_{12}(\alpha_{31} \alpha_{23}-\alpha_{33} \alpha_{21}) + \alpha_{13}(\alpha_{21}\alpha_{32}-\alpha_{31}\alpha_{22})]\\
\end{align*}
\begin{align*}
=& -\frac{1}{k}[d_4(r (k - 2 S_3) \theta - k \theta m_1 +k r m_3 - 2 r S_3 m_3 -k \phi m_3 - k m_1 m_3 -k P_3 \theta p_1 -k P_3 m_3 p_1 +\\
& d_3(k r - 2 r S_3 - k \phi -k (m_1 + P_3 p_1)) -P_3 (-k r + 2 r S_3 + k \phi + k m_1 +k P_3 p_1) p_3 -k d_1\\
& (\theta + d_3 + m_3 + P_3 p_3)) +V (-r (k - 2 S_3) \theta + k \theta m_1 +d_3 (-k r + 2 r S_3 + k \phi + k m_1) \\&
+(-k r +2 r S_3+ k \phi + k m_1) m_3 + k d_1 (\theta + d_3 + m_3)) p_3 q_3 +p_1 ((S_3 (-r (k - 2 S_3) \theta + k \theta m_1 \\
&+d_3 (-k r + 2 r S_3 + k \phi + k m_1) + (-k r + 2 r S_3 + k \phi +k m_1) m_3 +k d_1 (\theta + d_3 + m_3)) + P_3\\
& (-k r S_3 + 2 r S_3^2 - k V_3 \theta + k S_3 \phi +k S_3 (d_1 + m_1)) p_3) q_1 +k P_3 (V_3 \theta - S_3 \phi + V_3 (d_3 + m_3)) p_3 q_3)]
\end{align*}
\subsection{(Coefficients of Eq.\eqref{eq8})} \label{Appendix:a2}
After simplification, the coefficients of equation \eqref{eq8} are:
\begin{align*}
C_1 =& -tr(B) = -(\kappa_{11} + \kappa_{22} + \kappa_{33}),\\
=& -\left(r-2r\frac{S_4}{k}-\frac{rI_4}{k}-\beta I_4 -\phi - m_1-d_1+\beta S_4 + \sigma V_4  - m_2-d_2-c-\sigma I_4 -\theta -m_3-d_3 \right).\\
C_2 =& (\kappa_{11}\kappa_{22} - \kappa_{12}\kappa_{21})+(\kappa_{11}\kappa_{33}-\kappa_{13}\kappa_{31})+(\kappa_{22}\kappa_{33}-\kappa_{23}\kappa_{32}),\\
=& \frac{1}{k}[r (-2\beta S_4^2 +2 \theta S_4 + \theta I_4 + (2 S_4 + I_4)(-V_4 + I_4) \sigma) -k (-\beta \theta I_4 + V_4 \theta \sigma +V_4 \beta I_4 \sigma  - \beta I_4^2 \sigma\\
&+r (-S_4 \beta+ \theta - V_4 \sigma + I_4 \sigma) +V_4 \sigma \phi - I_4 \sigma \phi +S_4 \beta (\theta + I_4 \sigma + \phi)) + c (r (2 S_4 + I_4) +k(-r + \theta \\&
+ I_4 (\beta + \sigma) + \phi)) + c k d_3 - k r d_3 + 2 r S_4 d_3 -k S_4 \beta d_3 + r I_4 d_3 + k \beta I_4 d_3 - k V_4 \sigma d_3+ k \phi d_3 +c k m_1 \\&
- k S_4 \beta m_1 + k \theta m_1 - k V_4 \sigma m_1 + k I_4 \sigma m_1 + k d_3 m_1 - k r m_2 + 2 r S_4 m_2+ k \theta m_2 + r I_4 m_2 + k \beta I_4 m_2\\&
 + k I_4 \sigma m_2 +k \phi m_2 + k d_3 m_2 +k m_1 m_2 + (c k + r (2 S_4 + I_4)-k (r + S_4 \beta - \beta I_4 + V_4 \sigma - \phi) \\&
+k (m_1 + m_2)) m_3 +k d_1 (c - S_4 \beta + \theta -V_4 \sigma + I_4 \sigma + d_2+d_3 + m_2 + m_3) +d_2 (r (2 S_4 + I_4) \\&
+k (-r + \theta + I_4 (\beta + \sigma) + \phi) +k (d_3 + m_1 + m_3))]\\
C_3 =& -det(B) = -[\kappa_{11}(\kappa_{22} \kappa_{33} -\kappa_{32} \kappa_{23})+\kappa_{12}(\kappa_{31} \kappa_{23}-\kappa_{33} \kappa_{21}) + \kappa_{13}(\kappa_{21}\kappa_{32}-\kappa_{31}\kappa_{22})]\\
=& \frac{1}{k}[-c (r (k - 2 S_4) - (r +k \beta) I_4) (\theta + I_4 \sigma) + c k I_4 \sigma \phi +r (k V_4 \theta \sigma - V_4 \theta I_4 \sigma +k S_4 \beta (\theta + I_4 \sigma)-2 S_4^2 \beta \\
&(\theta + I_4 \sigma) +S_4 \sigma (-2 V_4 \theta + I_4 \phi)) - c k r d_3 + 2 c r S_4 d_3 + k r S_4 \beta d_3 - 2 r S_4^2 \beta d_3 +c r I_4 d_3 +c k \beta I_4 d_3 \\
&+ k r V_4 \sigma d_3 - 2 r S_4 V_4 \sigma d_3 - r V_4 I_4 \sigma d_3 -k V_4 \beta I_4 \sigma d_3 +c k \phi d_3 - k S_4 \beta \phi d_3 - k V_4 \sigma \phi d_3 + c k \theta m_1\\
& +c k d_3 m_1-k S_4 \beta \theta m_1 -k V_4 \theta \sigma m_1 +c k I_4 \sigma m_1 -k S_4 \beta I_4 \sigma m_1 -k S_4 \beta d_3 m_1 -k V_4 \sigma d_3 m_1 -k r \theta m_2\\
& + 2 r S \theta m_2 +r \theta I_4 m_2 +k \beta \theta I_4 m_2 -k r I_4 \sigma m_2 +2 r S_4 I_4 \sigma m_2 + r I_4^2 \sigma m_2 + k \beta I_4^2 \sigma m_2 + k I_4 \sigma \phi m_2 \\
&- k r d_3 m_2 + 2 r S_4 d_3 m_2 + r I_4 d_3 m_2 + k \beta I_4 d_3 m_2 + k \phi d_3 m_2 + k \theta m_1 m_2 + k I_4 \sigma m_1 m_2 + k d_3 m_1 m_2 \\
&+ (-2 r S_4^2 \beta + c r (2 S_4 + I_4) -r V_4 (2 S_4 + I_4) \sigma +k r (S_4 \beta + V_4 \sigma) - k S_4 \beta \phi - k V_4 \sigma (\beta I_4 + \phi) + c k\\
& (-r + \beta I_4 + \phi) + (r (2 S_4 +I_4) + k (-r + \beta I_4 + \phi)) m_2 + k m_1 (c - S_4 \beta - V_4 \sigma + m_2)) m_3 + d_2\\
& (-(r (k - 2 S_4) - (r +k \beta) I_4) (\theta + I_4 \sigma) + k I_4 \sigma \phi +d_3(r(2 S_4 + I_4) +k (-r + \beta I_4 + \phi) +k m_1)  \\
&+ (r (2 S_4 + I_4) + k (-r + \beta I_4 + \phi)) m_3 +k m_1 (\theta + I_4 \sigma+ m_3)) +k d_1 (-V_4 \theta \sigma + c (\theta + I_4 \sigma) \\
& -S \beta (\theta + I_4 \sigma) + (\theta + I_4 \sigma) m_2 +d_3 (-V_4 \sigma + m_2) +(-V \sigma + m_2) m_3+ (c - S_4 \beta) (d_3 + m_3) +d_2\\
& (\theta + I_4 \sigma + d_3 +m_3))].
\end{align*} 
\subsection{(Coefficients of Eq.\eqref{eq9})}
After simplification, the coefficients of equation \eqref{eq9} are
\begin{align*}
D_1 =& -tr(J(E_5))\\
=& - (c_{11}+c_{22}+c_{33}+c_{44})\\
D_2 =& \text{Sum of all the possible second order principal minors}\\
=& c_{33} c_{44} + c_{22} (c_{33} + c_{44}) + c_{11} (c_{22} + c_{33} + c_{44}) + \frac{(r + k \beta)\beta S_5 I_5}{k} +\sigma^2  V_5 I_5- \theta \phi + P_5 (p_1^2 q_1 S_5\\
&+ p_2^2 q_2 I_5+p_3^2 q_3 V_5)\\
D_3 =& -(\text{Sum of all the possible third order principal minors})\\
=& \frac{1}{k} [-(c_{33} + c_{44})(r + k \beta) \beta S_5 I_5 +k \beta \theta V_5 I_5 \sigma - c_{44} k V_5 I_5 \sigma^2 - k c_{11} (c_{33} c_{44} + c_{22} (c_{33} + c_{44})+\sigma^2 V_5 I_5 ) \\
&+ k \theta \phi c_{44}+ (r + k \beta)\sigma \phi S_5 I_5 +c_{22} k (-c_{33} c_{44} + \theta \phi) +P_5 \{-p_1 ((c_{22} + c_{33}) k S_5 p_1 +(r + k \beta)S_5 I_5 p_2\\
& -k V_5 \theta p_3) q_1 +k  p_2 I_5(S_5 \beta p_1 - (c_{11} + c_{33}) p_2 + \sigma p_3 V_5) q_2 -k p_3 q_3 (-\phi p_1 S_5+ \sigma p_2 V_5 I_5 \\
&+ (c_{11} + c_{22}) p_3 V_5)\}]\\
D_4 =& det(J(E_5))\\
=& \frac{1}{k}[c_{44} (c_{33} \beta S_5 I_5(r + k \beta) -k \beta \theta \sigma  I_5 V_5 + k c_{11} (c_{22} c_{33} + V_5 I_5 \sigma^2) - (k \theta c_{22} +S_5 (r + k \beta) I_5 \sigma) \phi) \\
&+P_5 \{k p_1^2 q_1 S_5 (c_{22} c_{33} + V_5 I_5 \sigma^2) + k p_2 q_2 I_5 ((c_{11} c_{33} - \theta \phi) p_2 + p_3 V_5 (\beta \theta -\sigma c_{11})) + (p_2 I_5(c_{11}k V_5 \sigma\\
& -\phi S_5 (r + k \beta)) +p_3 V_5(c_{11}  c_{22} k + S_5 \beta (r + k \beta) I_5))p_3 q_3 +p_1 (I_5 p_2 (q_1(c_{33} S_5 (r + k \beta)- k V_5 \theta \sigma)\\
& + k q_2 S_5 (-c_{33} \beta + \sigma \phi))+p_3 (-q_1 V_5 (c_{22} k \theta +S_5 (r + k \beta) I_5 \sigma)- k q_3 S_5 (V_5 \beta I_5 \sigma + c_{22} \phi)))\}]
\end{align*}
\label{Appendix:a3}

\end{document}